\def\jobis#1{FF\fi
  \def\predicate{#1}%
  \edef\predicate{\expandafter\strip@prefix\meaning\predicate}%
  \edef\job{\jobname}%
  \ifx\job\predicate
}
\if\jobis{proposal}%
\DeclareMathOperator{\Supp}{Supp}
 \numberwithin{equation}{subsection}
 \numberwithin{footnote}{subsection}
 \newtheorem{cor}[subsection]{Corollary}
 \newtheorem{lem}[subsection]{Lemma}
 \newtheorem{prop}[subsection]{Proposition}
 \newtheorem{thm}[subsection]{Theorem}
 \newtheorem{conj}[subsection]{Conjecture}
    \newtheoremstyle{upright}%
        {8pt plus2pt minus4pt}%
        {8pt plus2pt minus4pt}%
        {\upshape}%
        {}%
        {\bfseries\scshape}%
        {}%
        {1em}%
        {}%
\theoremstyle{upright}
 \newcommand{\N}{\mathbb N}
 \newcommand{\PP}{\mathbb P}
 \newcommand{\Q}{\mathbb Q}
 \newcommand{\R}{\mathbb R}
 \newcommand{\Z}{\mathbb Z}
 \newcommand{\bir}{\dashrightarrow}
\title{Singularities on toric fibrations}
\author{Caucher Birkar \hspace{2cm} Yifei Chen}
\date{\today}
\begin{document}
\maketitle

\emph{Dedicated to the occasion of the 70\textsuperscript{th} birthday of Vyacheslav V. Shokurov}

\begin{abstract}
In this paper we investigate singularities on toric fibrations. In this context we
study a conjecture of Shokurov (a special case of which is due to M\textsuperscript{c}Kernan) which
roughly says that if $(X,B)\to Z$ is an $\epsilon$-lc Fano type log Calabi-Yau fibration, then the singularities
of the log base $(Z,B_Z+M_Z)$ are bounded in terms of $\epsilon$ and $\dim X$ where $B_Z,M_Z$ are the
discriminant and moduli divisors of the canonical bundle formula.
A corollary of our main result says that if $X\to Z$ is a toric Fano fibration with $X$ being $\epsilon$-lc,
then the multiplicities of the fibres over codimension one points are
bounded depending only on $\epsilon$ and $\dim X$.
\end{abstract}



\section{Introduction}

We work over an algebraically closed field $k$ of characteristic zero.

In this text by a fibration we mean a contraction, that is, a
projective morphism $f\colon X\to Z$ with connected fibres.
A frequently occurring theme in algebraic geometry is to try to understand the geometry of $X$
in terms of $Z$ and the fibres of $f$. This is in particular fundamental in birational geometry where
often one encounters Fano fibrations and Calabi-Yau fibrations which are special instances of
the more general notion of log Calabi-Yau fibrations.
An important problem is to relate the singularities on $X$ to those on the
fibres of $f$ and those on $Z$.

In this context, M\textsuperscript{c}Kernan conjectured that the singularities of
$Z$ are bounded in terms of the singularities of $X$, that is:

\begin{conj}
Let $d$ be a natural number and $\epsilon>0$ be a real number. Then, there is a real number $\delta>0$ depending on $d,\epsilon$ satisfying the following. Assume that
  \begin{itemize}
    \item $X$ is $\epsilon$-lc of dimension $d$ and $\mathbb{Q}$-factorial, and
    \item $f\colon X\rightarrow Z$ is a Mori fibre space, that is, a $K_X$-negative extremal contraction
    with $\dim X>\dim Z$.
  \end{itemize}
  Then $Z$ is $\delta$-lc.
\end{conj}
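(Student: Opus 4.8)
I would reduce the statement to a log Calabi--Yau fibration, apply the canonical bundle formula, bound the discriminant b-divisor, and then transfer the bound back to $Z$ via the negativity lemma. For the first step, note that since $f$ is a $K_X$-negative extremal contraction, $-K_X$ is ample over $Z$; choosing $A$ sufficiently ample on $Z$ and $m\gg 0$, the divisor $-mK_X+f^*A$ is very ample, so for a general member $H$ of its linear system a standard Bertini argument gives that $(X,\tfrac{1}{m}H)$ is $\epsilon'$-lc for some $\epsilon'=\epsilon'(d,\epsilon)>0$, with $K_X+\tfrac{1}{m}H\sim_{\Q}0/Z$. Moreover $X$ is of Fano type over $Z$, since a general small $\Delta\sim_{\Q}-tK_X/Z$ with $0<t<1$ is klt with $-(K_X+\Delta)$ ample over $Z$. (If $\dim Z=0$ there is nothing to prove, and if $\dim X\le 1$ then $X$ is smooth and $Z$ is a point, so assume $\dim X\ge 2$.) Setting $B=\tfrac{1}{m}H$, we are reduced to the $\epsilon'$-lc Fano type log Calabi--Yau fibration $(X,B)\to Z$.

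Next, write the canonical bundle formula $K_X+B\sim_{\Q}f^*(K_Z+B_Z+M_Z)$ with discriminant divisor $B_Z\ge 0$ and moduli divisor $M_Z$, and let $\mathbf B,\mathbf M$ be the associated b-divisors, $\mathbf M$ being b-nef. The crux is to find $\delta>0$ depending only on $d,\epsilon$ such that $\mult_E\mathbf B\le 1-\delta$ for every prime divisor $E$ over $Z$. By base change along birational models of $Z$ and induction on $\dim Z$ this becomes a statement over codimension one points: a uniform lower bound for the log canonical threshold of a fibre with respect to an $\epsilon'$-lc Fano type log Calabi--Yau fibration over the germ of a smooth curve. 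Such a bound should follow because the fibres together with their complements lie in a bounded family (by the BAB theorem and boundedness of complements), after which the ACC for log canonical thresholds forces the threshold away from its only accumulation point $1$. This is the heart of the argument, and I expect it to be the main obstacle.

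Finally, to transfer the bound to $Z$, fix a prime divisor $E$ over $Z$ and a birational model $\pi\colon Z'\to Z$ on which $E$ is a divisor and $\mathbf M$ descends; write $B_{Z'},M_{Z'}$ for the traces, so $M_{Z'}$ is nef and $\mult_E B_{Z'}=\mult_E\mathbf B\le 1-\delta$. Comparing $K_{Z'}+B_{Z'}^{\mathrm{adj}}=\pi^*(K_Z+B_Z)$ with $K_{Z'}+B_{Z'}+M_{Z'}=\pi^*(K_Z+B_Z+M_Z)$ gives $B_{Z'}^{\mathrm{adj}}=B_{Z'}+(M_{Z'}-\pi^*M_Z)$; as $M_{Z'}-\pi^*M_Z$ is $\pi$-nef with $\pi_*(M_{Z'}-\pi^*M_Z)=0$, the negativity lemma yields $M_{Z'}-\pi^*M_Z\le 0$, so $\mult_E B_{Z'}^{\mathrm{adj}}\le\mult_E B_{Z'}\le 1-\delta$. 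Hence $a(E,Z)\ge a(E,Z,B_Z)=1-\mult_E B_{Z'}^{\mathrm{adj}}\ge\delta$, the first inequality because $B_Z\ge 0$, and since $E$ is arbitrary $Z$ is $\delta$-lc.

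A remark on the difficulty: the discriminant bound in the second step rests on the full strength of recent boundedness theory (BAB, boundedness of complements, ACC for log canonical thresholds) together with delicate base-change and induction bookkeeping to reach arbitrary divisors over $Z$ from codimension one. The sharper conjecture of Shokurov, asking that the whole log base $(Z,B_Z+M_Z)$ be $\delta$-lc, requires in addition controlling $M_{Z'}$ by itself --- not only the combination $M_{Z'}-\pi^*M_Z$ --- i.e. effective b-semiampleness of $\mathbf M$, which is open in general but vacuous for toric fibrations where $\mathbf M\equiv 0$; that is the setting exploited in the present paper.
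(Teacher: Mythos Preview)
The statement is presented in the paper as an open \emph{conjecture} (M\textsuperscript{c}Kernan's), not a theorem; there is no proof in the paper to compare against. The paper only establishes the toric case (Corollary~\ref{cor-bnd-sing-Z}, reproving \cite{Alexeev-Borisov-14}) as a consequence of Theorem~\ref{T:A-usual}.

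Your reduction in the first step and the negativity-lemma argument in the third step are correct, but the second step is a genuine gap --- it is exactly the open content of the conjecture. Two issues. First, the reduction ``by base change along birational models of $Z$ and induction on $\dim Z$'' does not preserve the hypotheses: after replacing $Z$ by a resolution $Z'$ and pulling back, one obtains only a \emph{sub}-pair $(X',B')$ over $Z'$, so the $\epsilon'$-lc Fano-type setting is lost and the induction does not run. Second, the claimed lower bound on the codimension-one lc threshold does not follow from BAB, boundedness of complements, and ACC as you indicate. BAB bounds the general fibre $F$, but your $B=\tfrac{1}{m}H$ has coefficient $\tfrac{1}{m}$ with $m$ depending on $X$, not just on $d,\epsilon$; hence $(F,\Supp B|_F)$ is not log bounded and the known case of the conjecture from \cite{Birkar-14} and \cite{Birkar-BAB} (which requires precisely this log boundedness) does not apply. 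Boundedness of complements produces an $n$-complement on $F$ unrelated to your $B$, and ACC for lc thresholds gives no uniform lower bound without first fixing a DCC coefficient set, which you do not have. You rightly flag this step as ``the main obstacle'': it is precisely why the conjecture was open at the time of writing and why the paper retreats to the toric setting, where averaging with the toric boundary $\Delta$ supplies the missing control.
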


When $d\leq 2$, the conjecture is trivial as $Z$ is either a point or a smooth curve.
Mori and Prokhorov \cite{MP-conic-bundle} proved the conjecture for $d=3$ and $\epsilon=1$ with $X$ having terminal singularities: in this case one can take $\delta=1$, confirming a conjecture of Iskovskikh.
 Alexeev and Borisov \cite{Alexeev-Borisov-14} proved the
conjecture for toric morphisms of toric varieties.

On the other hand, Shokurov independently proposed a more general conjecture in the setting of pairs.
To state the conjecture we need to recall adjunction for fibrations (also called the canonical bundle formula).
Let $f:X\rightarrow Z$ be a contraction of normal varieties, and $(X,B)$ a klt pair such that $K_X+B\sim_{\mathbb{Q}}0/Z$.
By Kawamata \cite{Kawamata-97}, \cite{Kawamata-98} and Ambro \cite{ambro-adj}\cite{ambro-lc-trivial}, we may write
$$K_X+B\sim_{\mathbb{Q}}f^*(K_Z+B_Z+M_Z)$$
where $B_Z$ is called the \emph{discriminant divisor} and $M_Z$ is called the \emph{moduli divisor}.
The discriminant divisor is determined as a Weil divisor while $M_Z$ is only determined up to
$\R$-linear equivalence. The discriminant divisor is defined by lc thresholds, that is,
the coefficient of a prime divisor $D$ in $B_Z$ is $1-t$ where $t$ is the largest real number such that
$(X,B+tf^*D)$ is lc over the generic point of $D$.
Moreover, for any birational morphism $Z'\to Z$ one can similarly define $B_{Z'},M_{Z'}$ whose pushdown to $Z$
are just $B_Z,M_Z$. Taking $Z'$ to be a sufficiently high resolution,
for any other birational morphism $Z''\to Z'$, $M_{Z''}$ is the pullback of $M_{Z'}$ which are both nef divisors.

Adjunction gives $(Z,B_Z+M_Z)$ which is a generalised pair, see \ref{S:gen-pair}.
To understand the singularities of $(Z,B_Z+M_Z)$ one only needs to understand the discriminant divisors
$B_{Z'}$ on birational models $Z'$. A refined version of Shokurov's conjecture then says:

\begin{conj}
Let $d$ be a natural number and $\epsilon>0$ be a real number. Then
there is a real number $\delta>0$ depending on $d,\epsilon$ satisfying the following.
Let $(X,B)$ be a pair and $f:X\rightarrow Z$ be a contraction such that
  \begin{itemize}
    \item $(X,B)$ is $\epsilon$-lc of dimension $d$,
    \item $K_X+B\sim_{\mathbb{Q}}0/Z$, and
    \item $X$ is of Fano type over $Z$, equivalently, $-K_X$ is big over $Z$.
  \end{itemize}
Then $(Z,B_Z+M_Z)$ is generalised $\delta$-lc.
\end{conj}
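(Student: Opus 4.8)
The conjecture is open in general; my plan is to prove it when $f\colon X\to Z$ is a \emph{toric} morphism — the case treated in this paper — and to point out what a proof in general would demand. (Note that here $f$, $X$, $Z$ are toric but the boundary $B$ need not be, so the problem is far from vacuous: the fibre pair $(F,B_F)$ can genuinely vary in moduli, so $M_Z$ is a nontrivial object.)

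The first step is a chain of reductions. By the canonical bundle formula, on a sufficiently high birational model $Z'\to Z$ the moduli part $M_{Z'}$ is nef and is pulled back from all higher models; on such a model $(Z,B_Z+M_Z)$ is generalised $\delta$-lc precisely when every coefficient of $B_{Z''}+M_{Z''}$ is at most $1-\delta$, for all models $Z''$ over $Z'$. Now the general fibre $(F,B_F)$ of $f$ — obtained by restricting $(X,B)$ — is an $\epsilon$-lc log Calabi--Yau pair of Fano type of dimension $<d$; by boundedness of such pairs (in the toric case this follows from boundedness of $\epsilon$-lc toric varieties of Fano type together with the ACC for coefficients of log Calabi--Yau pairs) it lies in a bounded family. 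Boundedness of the fibre controls the moduli part: there is $N=N(d,\epsilon)$ so that $NM$ descends, as a Cartier divisor, on a model bounded in terms of $d,\epsilon$, with bounded coefficients; so the $M$-contribution is harmless and we are reduced to producing $\delta=\delta(d,\epsilon)>0$ bounding the coefficients of the discriminant $B_{Z'}$ away from $1$, uniformly over all birational models $Z'$ of $Z$.

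Fix a prime divisor $E$ on such a $Z'$; after a further blow-up assume $Z'$ is smooth at the generic point $\eta_E$, so locally there $h\colon X''\to Z'$ is a morphism to a smooth curve germ, where $X''\to X$ is the induced model and $K_{X''}+B''=\pi^*(K_X+B)$ is the crepant pullback. The coefficient of $E$ in $B_{Z'}$ is $1-t$ with $t$ the lc threshold of $h^*E$ over $\eta_E$ with respect to $(X'',B'')$; this threshold is controlled by two quantities — the multiplicities $m_i$ of the vertical prime components $G_i$ of the fibre of $h$ over $\eta_E$, and the coefficients $\mathrm{coeff}_{G_i}(B'')$. Since $(X,B)$ is $\epsilon$-lc and $(X'',B'')$ is its crepant pullback, every divisor over $X$ — in particular every $G_i$ — satisfies $\mathrm{coeff}_{G_i}(B'')\le 1-\epsilon$, so that piece is already uniform; in the toric setting $t$ is moreover the finite combinatorial minimum of the ratios $\frac{1-\mathrm{coeff}_{D_\tau}(B'')}{m_\tau}$ over the rays $\tau$ (of a toric modification of $X$) lying over the ray of $\Sigma_{Z'}$ corresponding to $E$, with $m_\tau$ the fibre multiplicity, i.e. the positive integer determined by $\phi(v_\tau)=m_\tau u$ for the primitive generators $v_\tau,u$ and the lattice map $\phi$. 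Everything thus reduces to bounding these fibre multiplicities by $d$ and $\epsilon$ alone — for $E$ lying over a codimension-one point of $Z$ itself this is exactly the statement on fibre multiplicities quoted in the introduction.

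To bound them I would combine boundedness of the general fibre — whose fan $\Sigma_F$ then has primitive generators in a fixed finite set, and whose boundary coefficients lie in a fixed finite set — with the $\epsilon$-lc condition on the \emph{total space} $X$: the piecewise-linear support function $\psi$ of $(X,B)$ is $\ge\epsilon$ on all primitive lattice points of $|\Sigma_X|$, and, together with bigness of $-K_X$ over $Z$, this should force a controlled lower bound for $\psi$ along the lattice directions projecting onto a fixed ray of $\Sigma_Z$; expressing $v_\tau$ through the bounded generating data of the fibre fan and of the cones of $\Sigma_Z$ then bounds $m_\tau$. The step I expect to be the main obstacle is exactly this last one: extracting that effective lower bound for $\psi$ — equivalently, ruling out uniformly that the fibre degenerates so much over a codimension-one point as to make $t$ tend to $0$ — from the $\epsilon$-lc and relative-Fano hypotheses, and doing so uniformly over \emph{all} birational models $Z'$, since a ray of a high blow-up of $\Sigma_Z$ can lie far from $\Sigma_Z$. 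Boundedness of the general fibre by itself is not enough, because multiplicities can degenerate; one must use the global interplay of the hypotheses, which in the toric world is a delicate but concrete computation with lattices and cones, and which in the non-toric generality of the conjecture would require a genuinely new boundedness input for the degenerations of the log Calabi--Yau fibre over codimension-one points of arbitrary birational models of $Z$.
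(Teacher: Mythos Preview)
The statement you are addressing is a \emph{conjecture}; the paper does not prove it --- not in general, and not even in the toric case you single out. What the paper proves in the toric setting is strictly weaker: Theorem~\ref{T:A-usual} (and its generalised form Theorem~\ref{T:A}) shows only that after \emph{averaging} $B$ with the toric boundary $\Delta$, i.e.\ replacing $B$ by $\Gamma^\alpha=\alpha B+(1-\alpha)\Delta$ for some $\alpha=\alpha(d,\epsilon)$, the resulting adjunction pair on $Z$ is generalised $\delta$-lc. Since lc thresholds are concave rather than linear in the boundary, one only gets $\mu_D\Gamma_Z^\alpha\le\alpha\,\mu_D B_Z+(1-\alpha)\,\mu_D\Delta_Z$; the inequality goes the wrong way to extract a bound on $\mu_D B_Z$ from the bound on $\mu_D\Gamma_Z^\alpha$. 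So your plan, if it succeeded, would prove something the paper does not claim.

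Your reduction to bounding the discriminant coefficients on all birational models of $Z$ is correct (once $M_{Z'}$ descends to a nef b-divisor, generalised $\delta$-lc is literally the statement that every $\mu_D B_{Z'}\le 1-\delta$), but the justification you give via log boundedness of $(F,B_F)$ is wrong and unnecessary: $F$ is bounded by Borisov--Borisov, yet $(F,\Supp B_F)$ is \emph{not} log bounded without a lower bound on the horizontal coefficients of $B$ --- already on $\mathbb P^1$ one can take $B_F=\sum a_ip_i$ with arbitrarily many points. This is exactly the gap between Theorem~1.3 and the full conjecture.

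The genuine gap is the one you flag yourself. Your formula $t=\min_\tau\frac{1-\mathrm{coeff}_{D_\tau}(B'')}{m_\tau}$ reduces everything to bounding the multiplicities $m_\tau$ over \emph{all} birational models of $Z$, and your sketch (``extracting an effective lower bound for $\psi$'') is a wish, not an argument. Moreover the formula as written only treats torus-invariant centres $E$ on toric models $Z'$; non-toric divisors over $Z$ can have positive discriminant coefficient too, since $B$ is not toric, and you have not addressed those. Note also that the paper's Corollary~\ref{cor-bnd-mult-fib} (bounded fibre multiplicities over codimension-one points of $Z$ itself) is \emph{deduced from} the averaging theorem, not an input to it; there is no result in the paper bounding multiplicities over arbitrary blow-ups of $Z$.

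The paper's route to its weaker result is structurally very different from your direct combinatorial attack, and the averaging device is not cosmetic --- it is what makes the induction run. Averaging with $\Delta$ allows passage through intermediate bases (Lemma~\ref{L:relative-dim}), and repairs the negativity of boundaries introduced by finite covers and blow-ups (Lemmas~\ref{L:B} and~\ref{L:P^r-case}). The geometric core is Lemma~\ref{L:P^r-case}: when the general fibre is $\mathbb P^r$ one blows up a torus-invariant section to manufacture a $\mathbb P^1$-bundle over a $\mathbb P^{r-1}$-bundle, thus \emph{lowering the relative dimension} and feeding into the relative-dimension-one case (Lemma~\ref{L:C}), which is handled by \cite{Birkar-14}. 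A bounded finite cover (Lemma~\ref{L:Pn}) reduces the general toric Mori fibre space to the $\mathbb P^r$ case. None of these mechanisms appear in your proposal.
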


The conjecture is equivalent to saying that the discriminant divisors $B_{Z'}$
have coefficients $\le 1-\delta$, for every birational model $Z'\to Z$. In particular, the conjecture
then says that the multiplicities (that is coefficients) of fibres of $f$ over codimension one
points of $Z$ are bounded in terms of $d,\epsilon$. A special case of this is a result of
Mori and Prokhorov \cite{MP-del-pezzo} which says that if $X\to Z$ is a {terminal three-dimensional}
del Pezzo fibration onto a smooth curve, then the multiplicities of the fibres are bounded by 6.

The most general result regarding Shokurov's conjecture is due to the first author. Combining
the main results of \cite{Birkar-14} and \cite{Birkar-BAB} we have:

\begin{thm}
Shokurov conjecture holds when $(F,\Supp B|_F)$ is log bounded where $F$ is a general fibre of $f$.
In particular, the conjecture holds if the coefficients of the horizontal components of $B$ are $\ge \tau$
for some fixed real number $\tau>0$.
\end{thm}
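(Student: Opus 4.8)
The plan is to reduce the statement, via the reformulation recorded above, to a uniform lower bound on the ``gap'' $1-(\text{coefficient of }B_{Z'})$ taken over all birational models $Z'\to Z$, and then to obtain this bound by combining the boundedness of $\epsilon$-lc Fano varieties from \cite{Birkar-BAB} with the control of the canonical bundle formula for log bounded fibres from \cite{Birkar-14}. We may assume $\dim X>\dim Z$, since otherwise $f$ is birational and there is nothing to prove. Recall that the conjecture is equivalent to saying that the discriminant $B_{Z'}$ has all coefficients $\le 1-\delta$ for every $Z'\to Z$; since $(X,B)$ is klt, each such coefficient is already $<1$ (if $D\subset Z'$, then $(X,B)$ is klt over the generic point of $D$, so $(X,B+tf^{*}D)$ stays lc for some $t>0$), so the whole content is the uniformity of $\delta$.

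First I would dispose of the ``in particular'' clause. If the horizontal coefficients of $B$ are $\ge\tau$, then on a general fibre $F$ the pair $(F,B|_{F})$ is $\epsilon$-lc with $K_{F}+B|_{F}\sim_{\Q}0$, and $F$ is of Fano type because $X$ is; hence $F$ belongs to a bounded family by \cite{Birkar-BAB}. Fixing a very ample $H$ on the bounding family, $(-K_{F})\cdot H^{\dim F-1}$ is bounded, and so is $(B|_{F})\cdot H^{\dim F-1}$ because $B|_{F}\sim_{\Q}-K_{F}$; as every coefficient of $B|_{F}$ is $\ge\tau$, this bounds both the number of components of $\Supp B|_{F}$ and each of their $H$-degrees. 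Thus $(F,\Supp B|_{F})$ is log bounded and we are reduced to the main assertion.

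So assume $(F,\Supp B|_{F})$ is log bounded, fix $Z'\to Z$ and a prime divisor $T\subset Z'$, and choose a log resolution $X'\to X$ for which $f$ induces a morphism $f'\colon X'\to Z'$ and, with $B'$ the crepant pullback of $B$ (so the log discrepancies of $(X',B')$ are $\ge\epsilon$), the divisor $\Supp B'+(f')^{*}T$ is log smooth over the generic point $\eta_{T}$ of $T$. Writing $(f')^{*}T=\sum_{i}m_{i}E_{i}$ over $\eta_{T}$, an elementary computation on this log smooth model gives that the coefficient of $T$ in $B_{Z'}$ equals $1-\min_{i}a(E_{i};X,B)/m_{i}$, which is $\le 1-\epsilon/\max_{i}m_{i}$. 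So everything reduces to bounding the fibre multiplicities $m_{i}$ in terms of $d$ and the log bounding data, uniformly over all $Z'$ and $T$. To get this bound I would localise over $\eta_{T}$, producing a Fano type log Calabi--Yau fibration over the spectrum of a DVR whose generic fibre is a crepant model of the log bounded pair $(F,B|_{F})$; then, using boundedness of complements and BAB --- this is where the two cited results are combined --- one constructs a relatively ample ``Fano model'' of this fibration that lies in a bounded family, with $-(K+\text{boundary})$ as a relative polarisation, so that its special fibre has bounded anticanonical degree and hence bounded multiplicities. Since the multiplicity along a component $E_{i}$ that is not contracted by the birational map to the Fano model equals the order of a uniformiser along $E_{i}$, a valuation-theoretic invariant, this bounds the original $m_{i}$ (the treatment of possibly-contracted components requiring some care in the choice of $X'$). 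Taking $\delta=\epsilon$ divided by this bound makes $(Z,B_{Z}+M_{Z})$ generalised $\delta$-lc.

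The hard part will be this last step: extracting from the log boundedness of a single general fibre a bound on the fibre multiplicities that is independent of the model $Z'$ and of the divisor $T$. The lct computation and the reduction to multiplicities are routine; the uniform multiplicity bound genuinely requires both BAB and the theory of complements, since one must understand, simultaneously over all of $Z$ and all of its blowups, how a bounded family of general fibres can degenerate over a codimension one point, and keep that degeneration inside a bounded family so that the special fibre is controlled. This is precisely what \cite{Birkar-14} and \cite{Birkar-BAB} provide.
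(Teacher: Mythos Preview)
The paper does not prove this theorem independently: its entire argument is the two sentences following the statement, namely that the first assertion is \cite{Birkar-14} and the second follows from \cite{Birkar-BAB} because the coefficient bound $\ge\tau$ forces $(F,\Supp B|_F)$ to be log bounded. Your treatment of the ``in particular'' clause is exactly this last implication spelled out in detail, and it is correct: BAB bounds $F$, a very ample $H$ from the bounding family bounds the degree of $B|_F\sim_\Q -K_F$, and the coefficient floor $\tau$ then bounds the number and degree of the components of $\Supp B|_F$.

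For the first assertion you go well beyond the paper by trying to sketch the content of \cite{Birkar-14} itself. Your reduction of the discriminant coefficient to $1-\min_i a(E_i;X,B)/m_i$ on a log-smooth model over $\eta_T$ is correct, and the strategy of localising over a DVR and invoking boundedness is the right one. But the step ``everything reduces to bounding $\max_i m_i$'' hides the main difficulty: the $m_i$ are computed on a chosen resolution $X'$, and on an arbitrary log resolution they are unbounded (each further blow-up can increase them), so there is no a priori control. What \cite{Birkar-14} actually does is construct a \emph{specific} birational model on which both the discrepancies and the relevant multiplicities are simultaneously controlled; this requires bounded complements, a careful MMP, and a comparison of lc thresholds across models, not merely the observation that multiplicity is a valuation-theoretic invariant on non-contracted components. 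Your parenthetical that contracted components ``require some care in the choice of $X'$'' is precisely where the substance of \cite{Birkar-14} lives. So your sketch is a fair outline of why the citation suffices, but it is not itself a proof of the multiplicity bound, and the paper does not claim to give one either.
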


The first assertion follows from \cite{Birkar-14}. The second assertion
 follows from \cite{Birkar-BAB} as in this case $(F,\Supp B|_F)$ is log bounded.\\

In this paper, we prove the following result which essentially says that Shokurov conjecture
holds in the toric setting after taking an average with the toric boundary divisor.
This is enough for some interesting applications.

\begin{thm}\label{T:A-usual}
Let $d$ be a natural number and $\epsilon$ be a positive real number.
Then there exist rational numbers $\alpha,\delta\in (0,1)$ depending only on $d,\epsilon$
satisfying the following. Assume

\begin{itemize}
  \item $(X,\Delta)$ is a projective toric pair of dimension $d$, where $\Delta$ is the torus invariant divisor,
  \item $f\colon X\rightarrow Z$ is a toric contraction,
  \item $(X,B)$ is an $\epsilon$-lc pair where $B$ is a $\mathbb{Q}$-divisor (not necessarily toric), and
  \item $K_X+B\sim_{\mathbb{Q}}0/Z$.
\end{itemize}
  Let
  $$
  \Gamma^{\alpha}=\alpha B+(1-\alpha)\Delta
  $$
  and let
  $$K_X+\Gamma^{\alpha}\sim_{\mathbb{Q}} f^*(K_Z+\Gamma_Z^{\alpha}+N_Z^{\alpha})$$
  be given by adjunction.
  Then $(Z,\Gamma_Z^{\alpha}+N_Z^{\alpha})$ is generalised $\delta$-lc.
\end{thm}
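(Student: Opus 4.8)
The plan is to reduce the statement to a statement about toric varieties, where one can use the combinatorics of fans, and then exploit the fact that averaging $B$ with the toric boundary $\Delta$ pushes the horizontal coefficients away from zero so that the bounded-coefficient case (Theorem 1.6) can be invoked. First I would recall that for a toric contraction $f\colon X\to Z$ with $(X,\Delta)$ the toric pair, we have $K_X+\Delta\sim_{\Q} f^*(K_Z+\Delta_Z)$ where $(Z,\Delta_Z)$ is again the toric pair on $Z$; this is the toric instance of the canonical bundle formula and the moduli part vanishes. Since $K_X+B\sim_{\Q}0/Z$, adjunction for $K_X+\Gamma^\alpha=\alpha(K_X+B)+(1-\alpha)(K_X+\Delta)$ is compatible with this convex combination, so $\Gamma_Z^\alpha=\alpha B_Z+(1-\alpha)\Delta_Z$ and $N_Z^\alpha=\alpha M_Z$ on every birational model, where $(Z,B_Z+M_Z)$ is the log base coming from $(X,B)$. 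Thus it suffices to bound, uniformly in the birational model $Z'\to Z$, the coefficients of $\alpha B_{Z'}+(1-\alpha)\Delta_{Z'}$ and to control the nef part $\alpha M_{Z'}$.

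The key point for the discriminant is a local lc-threshold computation. Fix a prime divisor $D$ on a toric birational model $Z'$. Its coefficient in $\Gamma_{Z'}^\alpha$ is $1-t$ where $t$ is the lc threshold of $f'^*D$ with respect to $(X',\Gamma^{\alpha}|_{X'})$ over the generic point of $D$, where $X'\to X$ resolves things appropriately. Because $(X,\Delta)$ is toric and $(X',\Delta')$ is log smooth toric, the divisor $f'^*D$ is supported on toric divisors, and near the generic point of $D$ the pair $(X',\alpha B'+(1-\alpha)\Delta')$ has the property that along every toric divisor $E$ meeting that fibre the coefficient is at least $(1-\alpha)\cdot(\text{coeff of }E\text{ in }\Delta')$. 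The crucial observation is: the horizontal/toric part now carries coefficient bounded below by a function of $\alpha$, and simultaneously the $\epsilon$-lc hypothesis on $(X,B)$ gives an upper bound on multiplicities of $B$ along toric divisors. So I would choose $\alpha$ small enough (depending on $d,\epsilon$) that $(F,\Supp\Gamma^\alpha|_F)$ becomes log bounded for the general fibre $F$ — concretely, the fibre is toric of dimension $d-\dim Z$, hence lies in a bounded family, and the support of $\Gamma^\alpha|_F$ is contained in the toric boundary, which is automatically log bounded. Then Theorem 1.6 applies to the log Calabi-Yau fibration $(X,\Gamma^\alpha)\to Z$ once we know $(X,\Gamma^\alpha)$ is $\epsilon'$-lc for some $\epsilon'>0$ depending on $d,\epsilon,\alpha$: this holds because $\Gamma^\alpha=\alpha B+(1-\alpha)\Delta$ and on a toric variety $\Delta$ is the reduced toric boundary, so $(X,\Delta)$ is lc and the convex combination with the $\epsilon$-lc pair $(X,B)$ is $\alpha\epsilon$-lc (checking this on the toric resolution via a discrepancy computation). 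Applying Theorem 1.6 to $(X,\Gamma^\alpha)\to Z$ yields $\delta>0$ with $(Z,\Gamma_Z^\alpha+N_Z^\alpha)$ generalised $\delta$-lc, as desired.

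The main obstacle I expect is making the reduction to log boundedness genuinely uniform: one must verify that, for the chosen $\alpha$, the general fibre $F$ together with $\Supp(\Gamma^\alpha|_F)$ is log bounded \emph{with bound depending only on $d$}, not on $X$. For this I would argue that $F$ is a toric Fano-type variety of dimension $\le d$ — indeed $X$ being of Fano type over $Z$ (equivalently $-K_X$ big$/Z$) forces $-K_F$ big, hence $F$ is a toric variety with $-K_F$ big, and such toric varieties of bounded dimension form a bounded family by the toric case of BAB (or directly by bounding the number of rays of the fan via the $\epsilon$-lc, now $\alpha\epsilon$-lc, condition). Once $F$ is bounded and $\Supp(\Gamma^\alpha|_F)\subseteq$ (toric boundary of $F$), log boundedness is immediate. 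A secondary technical point is checking that the canonical bundle formula is compatible with the convex combination $\Gamma^\alpha=\alpha B+(1-\alpha)\Delta$ on all birational models simultaneously — this follows from linearity of lc thresholds in the boundary along a fixed valuation, but one should be careful that the moduli parts add up correctly, i.e.\ $M_Z^\alpha=\alpha M_Z+(1-\alpha)\cdot 0$ since the toric fibration has trivial moduli part; the nef-ness of $\alpha M_{Z'}$ on a high model is then inherited from that of $M_{Z'}$.
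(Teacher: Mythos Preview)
Your approach has a fatal gap: the claim that $\Supp(\Gamma^\alpha|_F)$ is contained in the toric boundary of $F$ is false. The divisor $B$ is explicitly allowed to be non-toric, so $\Supp B|_F$ may contain divisors that are not torus-invariant, and these appear in $\Gamma^\alpha = \alpha B + (1-\alpha)\Delta$ with positive coefficient. Nor can you fall back on the ``coefficients $\ge\tau$'' clause, since those non-toric horizontal components carry coefficient $\alpha\cdot(\text{coeff in }B)$, which can be arbitrarily small. In fact $(F,\Supp \Gamma^\alpha|_F)$ is not log bounded in general: already for $F=\PP^1$ one can take $B|_F = \tfrac{2}{n}\sum_{i=1}^n p_i$ with $n$ arbitrary, giving an $\epsilon$-lc log Calabi--Yau pair whose support has unbounded cardinality. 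So neither variant of the log-bounded-fibre theorem applies to $(X,\Gamma^\alpha)\to Z$. (Your secondary claim that adjunction is linear in the boundary, i.e.\ $\Gamma_Z^\alpha = \alpha B_Z + (1-\alpha)\Delta_Z$, is also incorrect --- the discriminant coefficient is a maximum of affine functions of the boundary, hence only convex --- though one does get an inequality in the helpful direction, so this is not the essential obstruction.)

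The paper's route is entirely different and does not invoke the log-bounded-fibre theorem. It deduces the statement as the special case $M=0$ of the generalised-pair version (Theorem~\ref{T:A}), which is proved by induction on $\dim X$ and on the relative dimension. After reducing to a $\Q$-factorial toric Mori fibre space, the key geometric input is a finite toric cover $\tilde X\to X$ of bounded degree (Lemma~\ref{L:Pn}) making the general fibre isomorphic to $\PP^r$; then a toric blow-up along a torus-invariant section turns the $\PP^r$-fibration into a $\PP^1$-bundle over a $\PP^{r-1}$-fibration (Lemma~\ref{L:P^r-case}), lowering the relative dimension. The base case of relative dimension one (Lemma~\ref{L:C}) is handled directly via \cite{Birkar-14}. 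The averaging with $\Delta$ is used not to force log boundedness of the fibre support, but to keep the boundary effective after the finite cover and the blow-up, and to splice the inductive steps together (Lemma~\ref{L:relative-dim}).
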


A consequence of the theorem is the following.

\begin{cor}\label{cor-bnd-mult-fib}
  Let $f\colon X\rightarrow Z$ be a toric Fano contraction where $X$ is projective $\epsilon$-lc of dimension $d$.
  Then there exists a natural number $m$ depending only on $\epsilon$ and $d$ such that
  the multiplicities of the fibres of $f$ over codimension one points of $Z$ are bounded by $m$.
\end{cor}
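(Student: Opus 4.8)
The plan is to deduce the corollary from Theorem \ref{T:A-usual}. Since $f$ is a Fano contraction, $-K_X$ is ample over $Z$. The first step is to produce a $\mathbb{Q}$-divisor $B\ge 0$ on $X$ with $K_X+B\sim_{\mathbb{Q}}0/Z$ such that $(X,B)$ is $\epsilon'$-lc for some $\epsilon'>0$ depending only on $d$ and $\epsilon$. Granting this, I put $\Gamma^{\alpha}=\alpha B+(1-\alpha)\Delta$ and let $K_X+\Gamma^{\alpha}\sim_{\mathbb{Q}}f^*(K_Z+\Gamma_Z^{\alpha}+N_Z^{\alpha})$ be the adjunction of Theorem \ref{T:A-usual}; applying that theorem with $\epsilon'$ in place of $\epsilon$ produces rational numbers $\alpha,\delta\in(0,1)$, depending only on $d$ and $\epsilon$, such that $(Z,\Gamma_Z^{\alpha}+N_Z^{\alpha})$ is generalised $\delta$-lc. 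I also record that $(X,\Gamma^{\alpha})$ is $\alpha\epsilon'$-lc, because for a prime divisor $E$ over $X$ one has $a(E,X,\Gamma^{\alpha})=\alpha\,a(E,X,B)+(1-\alpha)\,a(E,X,\Delta)\ge\alpha\epsilon'$, using that $(X,\Delta)$ is log canonical.

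I expect the construction of $B$ to be the main obstacle. A general fibre $F$ of $f$ is a projective toric variety which is $\epsilon$-lc and has $-K_F=-K_X|_F$ ample, hence is an $\epsilon$-lc Fano toric variety of dimension at most $d$; by boundedness of such varieties (Borisov--Borisov, or BAB in general) they lie in a bounded family depending only on $d$ and $\epsilon$. On such a bounded family one can choose, with uniform control, a $\mathbb{Q}$-divisor $0\le B_F\sim_{\mathbb{Q}}-K_F$ with $(F,B_F)$ being $\epsilon''$-lc for some $\epsilon''>0$ depending only on $d$ and $\epsilon$. One then spreads $B_F$ out to an effective horizontal $\mathbb{Q}$-divisor $B^{h}$ on $X$ restricting to such a boundary on the general fibre, so that $K_X+B^{h}\sim_{\mathbb{Q}}V/Z$ for some vertical $\mathbb{Q}$-divisor $V$, and corrects $B^{h}$ by a vertical $\mathbb{Q}$-divisor --- exploiting the ampleness of $-K_X$ over $Z$ --- to obtain $B\ge 0$ with $K_X+B\sim_{\mathbb{Q}}0/Z$. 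The delicate point is to carry out this correction so that the resulting vertical coefficients stay bounded away from $1$, keeping $(X,B)$ uniformly $\epsilon'$-lc; this is where the Fano-type geometry of $X$ over $Z$ is used essentially.

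Finally I would read off the bound on fibre multiplicities. Let $z\in Z$ be a codimension one point with closure $D$. If $D$ is not torus invariant, then $z$ lies in the open torus of $Z$, over which $f$ is flat with reduced fibres (toric varieties are Cohen--Macaulay and the fibre dimension is constant there), so the fibre over $z$ is reduced and there is nothing to prove. Assume then that $D=D_v$ is torus invariant, $v$ being the primitive generator of the corresponding ray of the fan of $Z$. If $\phi$ denotes the lattice homomorphism inducing $f$, the components of $f^*D$ are precisely the torus invariant prime divisors $\Gamma_j=D_{u_j}$ of $X$ with $\phi(u_j)=m_j v$ for integers $m_j\ge 1$, and $m_j=\mult_{\Gamma_j}f^*D$. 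Each $\Gamma_j$ is a component of the toric boundary $\Delta$, so $\mult_{\Gamma_j}\Gamma^{\alpha}=(1-\alpha)+\alpha b_j$ with $b_j=\mult_{\Gamma_j}B$ and $0\le b_j\le 1-\epsilon'$ (as $(X,B)$ is $\epsilon'$-lc). For any $t$ for which $(X,\Gamma^{\alpha}+tf^*D)$ is log canonical over $z$ it is, in particular, log canonical at the generic point of $\Gamma_j$, so $(1-\alpha)+\alpha b_j+tm_j\le 1$; hence the threshold $t_D$ defined by $\mult_D\Gamma_Z^{\alpha}=1-t_D$ satisfies $t_D\le\alpha(1-b_j)/m_j$ for every $j$. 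On the other hand $(Z,\Gamma_Z^{\alpha}+N_Z^{\alpha})$ being generalised $\delta$-lc forces $\mult_D\Gamma_Z^{\alpha}\le 1-\delta$, i.e. $t_D\ge\delta$. Combining, $\delta\le\alpha(1-b_j)/m_j\le\alpha/m_j$, so $m_j\le\alpha/\delta$ for all $j$. Thus $m:=\lceil\alpha/\delta\rceil$ depends only on $d$ and $\epsilon$ and bounds all multiplicities of fibres over codimension one points. (When $\dim X=\dim Z$ the contraction is birational and these fibres are reduced, so one may harmlessly assume $\dim X>\dim Z$.)
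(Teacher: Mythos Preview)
Your overall strategy matches the paper's exactly: construct a boundary $B$ with $K_X+B\sim_{\mathbb Q}0/Z$ and $(X,B)$ $\epsilon'$-lc, apply Theorem~\ref{T:A-usual}, and read off the fibre multiplicities from the bound on the discriminant $\Gamma_Z^\alpha$. Your final step is correct and in fact slightly sharper than the paper's (you extract $m_j\le \alpha/\delta$ rather than $m_j\le 1/\delta$).

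The genuine gap is in your first step, where you have misjudged the difficulty. Constructing $B$ is not the main obstacle; it is a one-line application of Bertini, and the paper dispatches it accordingly. Since $-K_X$ is $f$-ample and $X$ is projective, choose an ample divisor $A$ on $Z$ so that $-K_X+f^*A$ is ample on $X$; for $n\gg 0$ the linear system $|n(-K_X+f^*A)|$ is very ample and base-point-free, and a general member $H$ satisfies that $(X,\tfrac{1}{n}H)$ is $\epsilon$-lc (indeed has the same log discrepancies as $X$ itself, by the standard Bertini-type argument on a log resolution). Setting $B=\tfrac{1}{n}H$ gives $K_X+B\sim_{\mathbb Q}0/Z$ with $(X,B)$ $\epsilon$-lc, so you may take $\epsilon'=\epsilon$. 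None of the machinery you propose---Borisov--Borisov boundedness of the general fibre, spreading out a fibrewise boundary, correcting by vertical divisors---is needed, and the ``delicate point'' you flag simply does not arise. Once you replace your second paragraph with this observation, your argument is complete and essentially identical to the paper's.

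A minor remark: your case split on whether $D$ is torus invariant is unnecessary. The $\delta$-lc conclusion for $(Z,\Gamma_Z^\alpha+N_Z^\alpha)$ bounds $\mu_D\Gamma_Z^\alpha\le 1-\delta$ for \emph{every} prime divisor $D$ on $Z$, so the lc-threshold inequality $\delta\le (1-\mu_E\Gamma^\alpha)/m_E\le 1/m_E$ already gives $m_E\le 1/\delta$ uniformly, without invoking the product structure over the torus.
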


This says that if $D$ is a prime divisor on $Z$, then the coefficients of the components of $f^*D$
mapping onto $D$ are bounded by $m$. In general $f^*D$ may not be defined everywhere as $D$ may not be
$\Q$-Cartier but it is defined over a neighbourhood of the generic point of $D$.

Another corollary of the theorem is a new proof of the following result of
Alexeev and Borisov \cite{Alexeev-Borisov-14}.

\begin{cor}\label{cor-bnd-sing-Z}
  Let $f\colon X\rightarrow Z$ be a toric Fano contraction where $X$ is projective
  $\Q$-factorial $\epsilon$-lc of dimension $d$.
  Then there exists a real number $\delta>0$ depending only on $\epsilon$ and $d$ such that
  $Z$ has $\delta$-lc singularities.
\end{cor}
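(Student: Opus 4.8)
The plan is to deduce Corollary~\ref{cor-bnd-sing-Z} from Theorem~\ref{T:A-usual}, not from Corollary~\ref{cor-bnd-mult-fib}, since the latter only controls fibre multiplicities over codimension-one points whereas a bound on $\delta$-lc singularities of $Z$ requires controlling discrepancies on all birational models.

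First, I would reduce to a setting where the canonical bundle formula applies. Since $f\colon X\to Z$ is a toric Fano contraction and $X$ is $\Q$-factorial $\epsilon$-lc, choose the toric boundary $\Delta$ so that $(X,\Delta)$ is a projective toric pair. One cannot directly take $B=0$ in Theorem~\ref{T:A-usual} because we need $K_X+B\sim_{\Q}0/Z$; however, because $-K_X$ is big over $Z$ (Fano contraction) and $X$ is $\epsilon$-lc, a standard argument produces a $\Q$-divisor $B\geq 0$ with $(X,B)$ still $\epsilon'$-lc for some $\epsilon'>0$ depending only on $\epsilon,d$ and with $K_X+B\sim_{\Q}0/Z$. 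Here I would need to be slightly careful: the cleanest route is to run a construction purely in the torus-invariant category so that $B$ is supported on $\Delta$, ensuring $\Gamma^\alpha$ remains manageable. Then apply Theorem~\ref{T:A-usual} to get rational $\alpha,\delta_0\in(0,1)$ depending only on $d,\epsilon$ such that $(Z,\Gamma_Z^\alpha+N_Z^\alpha)$ is generalised $\delta_0$-lc, where $\Gamma^\alpha=\alpha B+(1-\alpha)\Delta$.

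Next comes the main point: translating a generalised $\delta_0$-lc bound on $(Z,\Gamma_Z^\alpha+N_Z^\alpha)$ into an honest $\delta$-lc bound on $Z$ itself. Since everything is toric, $Z$ is a toric variety and $\Gamma_Z^\alpha$ contains the toric boundary $\Delta_Z$ of $Z$ with some coefficients, while the moduli part $N_Z^\alpha$ is nef; on a toric variety the moduli divisor of a toric log Calabi-Yau fibration is torsion, or at least one can choose a representative supported on the toric boundary. The key inequality is that for any prime divisor $D$ over $Z$, the generalised log discrepancy $a(D,Z,\Gamma_Z^\alpha+N_Z^\alpha)$ is bounded above by $a(D,Z)$ minus the contributions of $\Gamma_Z^\alpha$ and $N_Z^\alpha$ along $D$. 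Because $\Gamma_Z^\alpha\geq (1-\alpha)\Delta_Z$ and the toric boundary has coefficients $1$, and because $N_Z^\alpha$ is nef, one obtains $a(D,Z)\geq a(D,Z,\Gamma_Z^\alpha+N_Z^\alpha)\geq \delta_0$ after accounting for the fact that on high toric resolutions the coefficient of any exceptional $D$ in $\Gamma_Z^\alpha$-pullback minus $\Delta_Z$-pullback is controlled. Thus $Z$ is $\delta$-lc with $\delta=\delta_0$ (or a simple explicit function of $\delta_0$ and $\alpha$).

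The hard part will be the last step: making precise how the generalised discrepancies of $(Z,\Gamma_Z^\alpha+N_Z^\alpha)$ dominate the ordinary discrepancies of $Z$. In the toric setting this should be transparent once one writes everything in terms of the fan — the point is that $K_Z+\Delta_Z\sim_{\Q}0$ is the trivial toric log Calabi-Yau structure, so $a(D,Z)=1-\mathrm{coeff}_D$ of the boundary on the resolution, and subtracting a nonnegative $\Q$-divisor plus a nef class only decreases log discrepancies. The subtlety is that $\Gamma_Z^\alpha$ may have coefficients exceeding $1$ along $\Delta_Z$ a priori (so $(Z,\Gamma_Z^\alpha+N_Z^\alpha)$ being generalised $\delta_0$-lc already forces those coefficients to be $\leq 1-\delta_0$), and one must verify that the horizontal-versus-vertical bookkeeping of $B$ under the canonical bundle formula does not spoil the comparison. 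I expect this to be handled by a direct fan-theoretic computation, and it is where I would spend most of the effort.
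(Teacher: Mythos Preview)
Your approach is essentially the paper's: apply Theorem~\ref{T:A-usual} to a pair $(X,B)$ with $K_X+B\sim_\Q 0/Z$ to get $(Z,\Gamma_Z^\alpha+N_Z^\alpha)$ generalised $\delta$-lc, then extract a bound on the singularities of $Z$ itself. The construction of $B$ is also handled as in the paper (one can take $(X,B)$ genuinely $\epsilon$-lc since $-K_X$ is ample$/Z$; no loss to $\epsilon'$ is needed, and there is no reason to make $B$ torus-invariant).

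Where you diverge is in the final step, which you flag as ``the hard part'' requiring a fan-theoretic computation. In the paper this step is one line: since $X$ is $\Q$-factorial and $f$ is a contraction, $Z$ is $\Q$-factorial, so $K_Z$ is $\Q$-Cartier. Once $K_Z$ is $\Q$-Cartier, the inequality $a(D,Z)\ge a(D,Z,\Gamma_Z^\alpha+N_Z^\alpha)$ is a completely general fact about generalised pairs: writing $K_{Z'}+B_{Z'}+M_{Z'}=\phi^*(K_Z+\Gamma_Z^\alpha+N_Z^\alpha)$ and $K_{Z'}+E'=\phi^*K_Z$ on a high resolution, one has $B_{Z'}-E'=\phi^*(\Gamma_Z^\alpha+N_Z^\alpha)-M_{Z'}$, and since $\Gamma_Z^\alpha\ge 0$ and $M_{Z'}$ is nef, the negativity lemma gives $B_{Z'}\ge E'$. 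Hence $a(D,Z)\ge\delta$ for every $D$. No toric combinatorics, no worry about coefficients of $\Gamma_Z^\alpha$ exceeding $1$, and no horizontal/vertical bookkeeping are needed. The role of the $\Q$-factoriality hypothesis on $X$ in the statement is precisely to make $K_Z$ $\Q$-Cartier so that this elementary comparison goes through; you did not invoke this hypothesis, which is why the last step looked harder than it is.
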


We will prove a more general form of the above theorem for generalised pairs as follows.

\begin{thm}\label{T:A}
Let $d$ be a natural number and $\epsilon$ be a positive real number.
Then there exist rational numbers $\alpha,\delta\in (0,1)$ depending only on $d,\epsilon$
satisfying the following. Assume

\begin{itemize}
  \item $(X,\Delta)$ is a projective toric pair of dimension $d$, where $\Delta$ is the torus invariant divisor,
  \item $f\colon X\rightarrow Z$ is a toric contraction,
  \item $(X,B+M)$ is a generalised $\epsilon$-lc pair where $B,M'$ are $\mathbb{Q}$-divisors (not necessarily toric)
  and $M'$ is the nef part of the pair, and
  \item $K_X+B+M\sim_{\mathbb{Q}}0/Z$.
\end{itemize}
  Let
  $$
  \Gamma^{\alpha}=\alpha B+(1-\alpha)\Delta \ \ \mbox{and} \ \ N^{\alpha}=\alpha M
  $$
  and let
  $$K_X+\Gamma^{\alpha}+N^{\alpha}\sim_{\mathbb{Q}} f^*(K_Z+\Gamma_Z^{\alpha}+N_Z^{\alpha})$$
  be given by generalised adjunction.
  Then $(Z,\Gamma_Z^{\alpha}+N_Z^{\alpha})$ is generalised $\delta$-lc.
\end{thm}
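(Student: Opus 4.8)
The plan is to reduce everything to combinatorics on the fan of $X$ and then extract a bound by a compactness/boundedness argument. First I would observe that since $(X,\Delta)$ is toric and $f$ is a toric contraction, the canonical bundle formula can be computed torically: writing $N_X$ for the lattice and $\sigma\subseteq N_{X,\R}$ for the support of the fan of $X$, the contraction $f$ corresponds to a quotient lattice $N_Z=N_X/N_F$ where $N_F$ is the lattice of the generic fibre, and the discriminant divisor $\Delta_Z$ of $(X,\Delta)$ over $Z$ is exactly the toric boundary $\Delta_Z$ of $Z$ because a toric log Calabi--Yau fibration $(X,\Delta)\to Z$ has trivial moduli part and $K_Z+\Delta_Z$ pulls back to $K_X+\Delta$. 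Then for the interpolated boundary $\Gamma^\alpha=\alpha B+(1-\alpha)\Delta$ one has, over the generic point of a prime divisor $D\subset Z$, that the lc threshold computing the coefficient of $D$ in $\Gamma_Z^\alpha$ is a convex combination controlled by the multiplicity $m_D$ of the fibre of $f$ over $D$ together with the contribution of $B$; the key point is that the coefficient of $D$ in $\Gamma_Z^\alpha$ is at most $1-\alpha + \alpha\,(\text{something}\le 1)$ plus a correction of size $O(\alpha\, m_D)$. So the whole theorem comes down to: (i) bounding the fibre multiplicities $m_D$ (this is Corollary \ref{cor-bnd-mult-fib}, which itself must follow from the same circle of ideas), and (ii) making the analogous bound on every birational model $Z'\to Z$, since generalised $\delta$-lc is a statement about all discriminant coefficients $b_{D'}$ for $D'$ over $Z$.

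Second, to bound the fibre multiplicities, I would use that $X$ is $\epsilon$-lc as a \emph{variety}: this means every primitive ray generator $v$ of the fan of $X$ satisfies $\psi_K(v)\le 1-\epsilon$ for the support function of $-K_X$ written in terms of $\Delta$, equivalently the rays of $X$ avoid a neighbourhood of the boundary of the "$\epsilon$-shifted" polytope. Over a codimension one point $D\in Z$, the fibre of $f$ is a toric variety whose multiplicity $m_D$ is the lattice index $[N_F : \langle \text{primitive generators of the fibre rays}\rangle]$ or, more precisely, is read off from where the rays of the two-dimensional slice of the fan mapping to $D$ land; since $-K_X$ is big over $Z$ (Fano type), this slice is a complete fan on a half-plane-type region, and $\epsilon$-lc-ness forces the relevant lattice points to lie in a bounded region, hence $m_D\le m(\epsilon,d)$. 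This is essentially the toric case of McKernan's conjecture for the divisorial part; I expect to invoke or re-derive the Alexeev--Borisov type estimate \cite{Alexeev-Borisov-14} here, or to give a direct lattice-geometry argument using that a complete $\epsilon$-lc toric fan in bounded dimension has bounded "complexity."

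Third — and this is where the interpolation parameter $\alpha$ earns its keep — I would handle the horizontal part of $B$, which a priori is not toric and not bounded. Because $B$ need only be $\epsilon$-lc and its horizontal coefficients can be tiny, $\Gamma_Z^\alpha$ could have coefficients very close to $1$ coming from $B$ alone; averaging with $(1-\alpha)\Delta$ does not obviously fix this. The resolution is that $K_X+B\sim_\Q 0/Z$ together with $K_X+\Delta\sim_\Q 0/Z$ (torically) forces $B-\Delta\sim_\Q 0/Z$, so $B$ and $\Delta$ have the \emph{same} restriction to the generic fibre up to $\Q$-linear equivalence; combined with $\epsilon$-lc-ness of $(X,B)$ this bounds $(F,\Supp B|_F)$ in the fibre direction after the averaging — more precisely, $(F, \Supp\Gamma^\alpha|_F)$ becomes log bounded once $\alpha$ is chosen small enough depending only on $\epsilon,d$, because $\Gamma^\alpha|_F$ is $\ge (1-\alpha)\Delta|_F$ which has coefficients bounded below, so Theorem 1.5 (the $\tau$-version of Shokurov's conjecture, with $\tau=1-\alpha$) applies on the generic fibre. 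The subtlety is that Theorem 1.5 is stated for the original pair, not the averaged one, so I would instead redo its proof in this toric-averaged setting: choosing $\alpha=\alpha(\epsilon,d)$ small makes the general fibre of $(X,\Gamma^\alpha)\to Z$ lie in a bounded family, then the effective adjunction / boundedness of the moduli part from \cite{Birkar-14} yields a single $\delta$ controlling $\Gamma_{Z'}^\alpha+N_{Z'}^\alpha$ on all models. The generalised version (Theorem \ref{T:A}) is then the same argument carrying the nef part $M$ along, with $N^\alpha=\alpha M$, using generalised adjunction in place of ordinary adjunction; the nef part only improves the positivity needed and does not affect the lattice-theoretic bounds.

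The main obstacle I anticipate is step three: controlling the non-toric horizontal divisor $B$ uniformly. Everything toric — the discriminant being the toric boundary, the bound on vertical fibre multiplicities — is comparatively soft lattice geometry; but $B$ ranges over an infinite-dimensional family and one must leverage both relations $K_X+B\sim_\Q 0/Z$ and the $\epsilon$-lc hypothesis simultaneously, and find the correct $\alpha$ so that averaging genuinely buys log boundedness of the fibre pair without destroying the $\epsilon$-lc property of $X$ that powered the toric estimates. Getting $\alpha$ and $\delta$ to depend only on $d,\epsilon$ — and not on $B$ or on the fan of $X$ — is the crux.
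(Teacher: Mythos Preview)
Your proposal takes a route that the paper does not, and the route has a genuine gap at the point you yourself flag as the main obstacle.

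The problem is in step three. You want to apply Theorem 1.3 (the $\tau$-version of Shokurov's conjecture) to $(X,\Gamma^\alpha+N^\alpha)\to Z$, arguing that the horizontal coefficients of $\Gamma^\alpha$ are bounded below by $1-\alpha$. But this is false: any horizontal component of $B$ that is \emph{not} a component of $\Delta$ appears in $\Gamma^\alpha$ with coefficient $\alpha\cdot(\text{its coefficient in }B)$, which can be arbitrarily small. So the $\tau$-hypothesis fails. The alternative, log boundedness of $(F,\Supp\Gamma^\alpha|_F)$, fails for the same reason: $\Supp\Gamma^\alpha|_F\supseteq\Supp B|_F$, and nothing bounds the non-toric part of $\Supp B|_F$. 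Your suggestion to ``redo the proof of Theorem 1.3 in this toric-averaged setting'' is not a plan --- the entire content of that theorem is the log boundedness of the fibre pair, which you do not have. Step two is also problematic: bounding fibre multiplicities over codimension-one points of $Z$ itself (even granting Alexeev--Borisov) says nothing about discriminant coefficients on \emph{higher} birational models $Z'\to Z$, which is what generalised $\delta$-lc requires.

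The paper's argument is structurally different and never invokes Theorem 1.3. It proceeds by double induction on $\dim X$ and on the relative dimension $r=\dim X-\dim Z$. After $\Q$-factorialising and running a $K_X$-MMP over $Z$, one reduces to a toric Mori fibre space $X\to Z$. A bounded-degree finite toric cover $\tilde X\to X$ (Lemma \ref{L:Pn}, using Borisov--Borisov boundedness of $\epsilon$-lc toric Fanos) makes the general fibre $\PP^r$; the Hurwitz defect in $K_{\tilde X}+\tilde B$ is supported on $\tilde\Delta$ and is absorbed by averaging with $\tilde\Delta$ --- \emph{this} is what $\alpha$ is for. Then, blowing up a torus-invariant section of $\PP^r\times U\to U$ produces a $\PP^1$-bundle over $\PP^{r-1}\times U$, i.e.\ factors $f$ through a relative-dimension-one contraction; the exceptional divisor acquires coefficient $\ge 1-r$ in $B_Y$, and again averaging with $\Delta_Y$ (taking $\theta=1/r$) restores effectivity. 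The relative-dimension-one base case (Lemma \ref{L:C}) is handled by \cite{Birkar-14} after trading the nef part $M'$ for an honest boundary. A composition lemma (Lemma \ref{L:relative-dim}) glues the two steps. So in the paper $\alpha$ is a device for restoring effectivity after finite covers and blow-ups, not for forcing a lower bound on horizontal coefficients.
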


For the definition of adjunction for generalised pairs, see \ref{S:Adjunction-gen-pair} which follows 
\cite{Filipazzi-18}. 

We stated our results in the projective setting, that is, when $X$ is projective. However,
our proofs also work assuming only projectivity of $X\to Z$. The reason for assuming $X$ projective
is that we apply the adjunction formula for generalised pairs as in \ref{S:Adjunction-gen-pair}
which seems to have been worked out only when the total space is projective but it should hold
in general.

\subsection*{Acknowledgements}
Part of the work was done when we visited Tianyuan Mathematical Center in Northeast China and Jilin University in August 2019. We thank them for their hospitality. Part of the work was done during the second author visiting DPMMS at University of Cambridge in January 2020 and he thanks them for their support.
The first author was supported by a grant of the Royal Society. The second author was supported by NSFC grants (No. 11688101, 11771426 and 11621061). We would like to thank the referee's valuable comments.

\section{Preliminaries}

We work over an algebraically closed field $k$ of characteristic zero: all varieties and schemes are over $k$ unless stated otherwise.

\subsection{Contractions}
By a \emph{contraction} we mean a projective morphism $f\colon X\rightarrow Y$ of varieties such that $f_*\mathcal{O}_X=\mathcal{O}_Y$ ($f$ is not necessarily birational). In particular, $f$ is surjective and has connected fibres.

\subsection{Pairs}
A \emph{sub-pair} $(X,B)$ consists of a normal quasi-projective variety $X$ and a sub-boundary $B$, that is, an $\mathbb{R}$-divisor on $X$ with coefficients in $(-\infty,1]$ such that $K_X+B$ is $\mathbb{R}$-Cartier. We say
$(X,B)$ is a \emph{pair} if in addition $B\ge 0$.

Let $\phi\colon W\to X$ be a log resolution of a sub-pair $(X,B)$. Let $K_W+B_W$ be the
pullback of $K_X+B$. The \emph{log discrepancy} of a prime divisor $D$ on $W$ with respect to $(X,B)$
is defined as
$$
a(D,X,B):=1-\mu_DB_W.
$$
We say $(X,B)$ is \emph{sub-lc} (resp. \emph{sub-klt})(resp. \emph{sub-$\epsilon$-lc}) if
 $a(D,X,B)$ is $\ge 0$ (resp. $>0$)(resp. $\ge \epsilon$) for every $D$. This means that
every coefficient of $B_W$ is $\le 1$ (resp. $<1$)(resp. $\le 1-\epsilon$). If $(X,B)$ is a pair,
we remove the sub and just say it is lc (resp. klt)(resp. $\epsilon$-lc).
Note that since $a(D,X,B)=1$ for most prime divisors, we necessarily have $\epsilon\le 1$.

 We refer to \cite{Kollar-Mori-98} for standard definitions and results on singularities of pairs
 and log minimal model program.

Let $(X,B)$ be an lc sub-pair and $M\geq 0$ be an $\mathbb{R}$-Cartier divisor.
The \emph{lc threshold} of $M$ with respect to $(X,B)$ is the largest real number $t$ so that $(X,B+tM)$ is sub-lc.

\subsection{Fano type varieties}
Let $X\to Z$ be a contraction of normal varieties. We say $X$ is of \emph{Fano type} over $Z$
if there is a boundary $C$ such that $(X,C)$ is a klt pair and $-(K_X+C)$ is ample over $Z$.
It follows from \cite{bchm} that we can run the MMP on any $\R$-Cartier $\R$-divisor $D$ on $X$
relatively over $Z$ and that this terminates with a $D$-negative fibre space or a
$D$-minimal model, that is, a $D$-nef model.

We say $X\rightarrow Z$ is a \emph{Mori fiber space} if $-K_X$ is ample over $Z$, and
the relative Picard number is $\rho(X/Z)=1$.

\subsection{b-divisors}
Let $X$ be a variety. A $b$-$\mathbb{R}$-\emph{Cartier} $b$-\emph{divisor over} $X$ is the choice of a projective birational morphism $Y\rightarrow X$ from a normal variety and an $\mathbb{R}$-Cartier divisor $M$ on $Y$ up to the following equivalence: another projective birational morphism $Y'\rightarrow X$ from a normal variety and an $\mathbb{R}$-Cartier divisor $M'$ defines the same b-$\mathbb{R}$-Cartier b-divisor if there is a common resolution $W\rightarrow Y$ and $W\rightarrow Y'$ on which the pullbacks of $M$ and $M'$ coincide.

\subsection{Generalised pairs} \label{S:gen-pair} The concept of generalised pair is defined and studied by \cite{Birkar-Zhang-16}. For a very recent survey for generalised pairs, we refer to \cite{Birkar-20}.

A \emph{generalised pair} consists of
\begin{itemize}
  \item a normal variety $X$ equipped with a projective morphism $X\rightarrow Z$,
  \item an $\mathbb{R}$-divisor $B\geq 0$ on $X$ with coefficients in $[0,1]$, and
  \item a b-$\mathbb{R}$-Cartier b-divisor over $X$ represented by some projective birational morphism $X'\xrightarrow{\phi} X$ and $\mathbb{R}$-Cartier divisor $M'$ on $X'$
\end{itemize}
 such that $M'$  is nef$/Z$ and $K_X+B+M$ is $\mathbb{R}$-Cartier, where $M:=\phi_*M'$.

We usually refer to the pair by saying $(X,B+M)$ is a generalised pair with data $X'\rightarrow X\rightarrow Z$ and $M'$, and call $M'$ the nef part.

We now define generalised singularities. Replacing $X'$ we can assume $\phi$ is a log resolution of $(X,B)$. We can write
$$K_{X'}+B'+M'=\phi^*(K_X+B+M)$$
for some uniquely determined $B'$. For a prime divisor $D$ on $X'$ the \emph{generalised log discrepancy} $a(D,X,B+M)$ is defined to be $1-\mu_DB'$. We say $(X,B+M)$ is \emph{generalised lc} (resp. \emph{generalised klt}) (resp. \emph{generalised} $\epsilon$-\emph{lc}) if for each $D$ the generalised log discrepancy $a(D,X,B+M)$ is $\geq 0$ (resp. $>0$) (resp. $\geq \epsilon$).

\emph{Generalised sub-pairs} and their singularities are similarly defined by allowing the coefficients of $B$
to be in $(-\infty,1]$.

Let $(X,B+M)$ be a generalised sub-pair. Let $\psi:Y\rightarrow X$ be a generically finite morphism
from a normal variety. Pick a resolution $Y'\to Y$ so that $\rho\colon Y'\bir X'$ is a morphism.
As noted above we can write
$$
K_{X'}+B'+M'=\phi^*(K_X+B+M)
$$
from which we get
$$
K_{Y'}+B_{Y'}+M_{Y'}=\rho^*\phi^*(K_X+B+M)
$$
where $K_{Y'}+B_{Y'}$ is the pullback of $K_{X'}+B'$ and $M_{Y'}$ is the pullback of $M'$.
Let $B_Y$ and $M_Y$ be the pushdowns of $B_{Y'}$ and $M_{Y'}$ to $Y$ respectively.
Then we get
$$
K_Y+B_Y+M_Y=\psi^*(K_X+B+M)
$$
where we consider $(Y,B_Y+M_Y)$ as a generalised sub-pair with nef part $M_{Y'}$.
Using Hurwitz formula it is easy to see that if $(X,B+M)$ is sub-$\epsilon$-lc, then
$(Y,B_Y+M_Y)$ is also sub-$\epsilon$-lc. Indeed assuming $\phi$ is a log resolution,
the coefficients of $B'$ being $\le 1-\epsilon$ implies that the coefficients of $B_{Y'}$
are also $\le 1-\epsilon$.

\subsection{Generalised adjunction for fibrations} \label{S:Adjunction-gen-pair}
 Consider the following set-up. Assume that
\begin{itemize}
  \item $(X,B+M)$ is a generalised sub-pair with data $X'\rightarrow X\to Z$ and $M'$,
  \item $f:X\rightarrow Z$ is a contraction with $\dim Z>0$,
  \item $(X,B+M)$ is generalised sub-lc over the generic point of $Z$, and
  \item $K_X+B+M\sim_{\mathbb{R}}0/Z$.
\end{itemize}
We define the discriminant divisor $B_Z$ for the above setting. Let $D$ be a prime divisor on $Z$.
Let $t$ be the largest real number such that $(X,B+tf^*D+M)$ is generalised lc
over the generic point of $D$. This makes sense even if $D$ is not $\mathbb{Q}$-Cartier because
we only need the pullback $f^*D$ over the generic point of $D$ where $Z$ is smooth.
 We then put the coefficient of $D$ in $B_Z$ to be $1-t$. Note that since $(X,B+M)$ is generalised
 sub-lc over the generic point of $Z$, $t$ is a real number, that is, it is not $-\infty$ or $+\infty$.
 Having defined $B_Z$, we can find $M_Z$ giving
$$K_X+B+M\sim_{\mathbb{R}}f^*(K_Z+B_Z+M_Z)$$
where $M_Z$ is determined up to $\mathbb{R}$-linear equivalence. We call $B_Z$ the \emph{discriminant divisor of adjunction} for $(X,B+M)$ over $Z$. If $B,M'$ are $\Q$-divisors and $K_X+B+M\sim_{\mathbb{Q}}0/Z$, then
$B_Z$ is a $\Q$-divisor and we can choose $M_Z$ also to be a $\Q$-divisor.

For any birational morphism $Z'\to Z$ from a normal variety, we can similarly define $B_{Z'}$ and $M_{Z'}$.

For more details about adjunction for generalised fibrations, we refer to \cite{Filipazzi-18} and
\S 6.1 of \cite{Birkar-18}. It was shown in \cite{Filipazzi-18} that if $X$ is projective,
$(X,B+M)$ is generalised lc over the generic point of $Z$, and $B,M'$ are
$\Q$-divisors, and $M'$ is globally nef, then taking $Z'$ high enough,
$M_{Z'}$ is nef, hence we can regard $(Z,B_Z+M_Z)$ as a generalised pair with nef part $M_{Z'}$.

\begin{lem}\label{l-adj-composition}
Assume that
\begin{itemize}
  \item $(X,B+M)$ is a projective generalised sub-pair with data $X'\rightarrow X$ and $M'$,
  \item $X \xrightarrow{g} Y\xrightarrow{h} Z$ are contractions of normal varieties with $\dim Z>0$,
  \item $(X,B+M)$ is generalised lc over the generic point of $Z$, and
  \item $K_X+B+M\sim_{\mathbb{Q}}0/Z$.
\end{itemize}
Let
$$
K_X+B+M\sim_{\mathbb{Q}} g^*(K_Y+B_Y+M_Y)
$$
be given by adjunction for $(X,B+M)$ over $Y$ and let
$$
K_X+B+M\sim_{\mathbb{Q}}f^*(K_Z+B_Z+M_Z)
$$
be given by adjunction for $(X,B+M)$ over $Z$ where $f=hg$ is $X\to Z$.
Then
$$
K_Y+B_Y+M_Y\sim_{\mathbb{Q}}h^*(K_Z+B_Z+M_Z)
$$
is adjunction for $(Y,B_Y+M_Y)$ over $Z$.

\end{lem}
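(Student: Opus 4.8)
The plan is to compare the discriminant divisors of the two adjunctions prime divisor by prime divisor on $Z$, using the compatibility of lc thresholds under composition. Fix a prime divisor $D$ on $Z$. Let $b_D$ be the coefficient of $D$ in $B_Z$ coming from adjunction for $(X,B+M)$ over $Z$, so $b_D=1-t$ where $t$ is the largest real number with $(X,B+tf^*D+M)$ generalised lc over the generic point $\eta_D$ of $D$. On the other hand, the adjunction $K_Y+B_Y+M_Y\sim_{\mathbb{Q}} h^*(K_Z+B_Z'+M_Z')$ for $(Y,B_Y+M_Y)$ over $Z$ assigns $D$ the coefficient $1-s$, where $s$ is the largest real number with $(Y,B_Y+sh^*D+M_Y)$ generalised lc over $\eta_D$. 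I want to show $t=s$, which gives $B_Z=B_Z'$; then since in all three formulas $M_{(-)}$ is determined only up to $\mathbb{Q}$-linear equivalence by the corresponding $B_{(-)}$ together with the fixed linear equivalence class of $K_X+B+M$, transitivity of pullbacks ($f^*=g^*h^*$) forces $M_Z\sim_{\mathbb{Q}}M_Z'$ as well, so the second displayed equivalence of the conclusion is exactly adjunction for $(Y,B_Y+M_Y)$ over $Z$.

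The key step, then, is the equality $t=s$, and it reduces to the following local statement near $\eta_D$: for a rational number $\lambda\ge 0$, $(X,B+\lambda f^*D+M)$ is generalised lc over $\eta_D$ if and only if $(Y,B_Y+\lambda h^*D+M_Y)$ is generalised lc over $\eta_D$. Since $Z$ is smooth at $\eta_D$ we may and do work over a neighbourhood where $D$ is Cartier, so $f^*D=g^*h^*D$ makes sense and $(X,B+\lambda f^*D+M)$ is obtained from the generalised sub-pair $(Y,B_Y+\lambda h^*D+M_Y)$ by pulling back along the contraction $g$: indeed $K_X+B+\lambda f^*D+M\sim_{\mathbb{Q}} g^*(K_Y+B_Y+\lambda h^*D+M_Y)$ because $K_X+B+M\sim_{\mathbb{Q}} g^*(K_Y+B_Y+M_Y)$ by the hypothesis on $g$-adjunction and $\lambda f^*D=g^*(\lambda h^*D)$. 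Now I invoke the standard principle that generalised log discrepancies are unchanged — or can only decrease, in the direction that matters — when pulling a generalised sub-pair back along a contraction: passing to a common log resolution $W\to X'$, $W\to Y'$ of all data, the divisor $B_W$ computed from $X$ and the divisor $B_W^{Y}$ computed from $Y$ agree on $W$ because both equal the pullback of $K_W$ minus the (common) pullback of $K_Z+B_Z'+\lambda D+M_Z'$ along $W\to Z$ over $\eta_D$; hence the generalised lc property over $\eta_D$ is detected identically from either side. This is the same bookkeeping already used in \ref{S:gen-pair} when pulling generalised sub-pairs back along generically finite morphisms, now applied to the contraction $g$ over the (smooth) generic point of $D$.

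I expect the main obstacle to be purely bookkeeping: one must choose a single birational model $W$ dominating $X'$, $Y'$, and a high resolution of $Z$ simultaneously, track that $M'$ pulls back compatibly to the nef part on all of them (so that the ``$M$'' contributions to the three generalised log discrepancies genuinely match, not merely the ``$B$'' contributions), and be careful that all statements are made only over the generic point $\eta_D$ of $D$, where $Z$ is smooth and $D$ is Cartier, so that $f^*D$, $g^*$, $h^*$ are all legitimate. Once the identification $B_Z=B_Z'$ is in hand, the moduli part matches automatically by $\mathbb{Q}$-linear equivalence and uniqueness, and there is nothing further to prove; the functoriality $B_{Z'}$ on higher models then also matches by the same argument applied over each birational model of $Z$, so $(Y,B_Y+M_Y)\to Z$ really is the adjunction claimed.
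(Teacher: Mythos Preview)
Your overall plan matches the paper's: reduce to showing $t=s$ for each prime divisor $D$ on $Z$ (and on birational models), then let the moduli parts match by uniqueness. The gap is in your justification of the equivalence ``$(X,B+\lambda f^*D+M)$ is generalised lc over $\eta_D$ iff $(Y,B_Y+\lambda h^*D+M_Y)$ is''. Your ``common log resolution $W\to X'$, $W\to Y'$'' does not make sense here: if $W\to X'$ is birational then $\dim W=\dim X>\dim Y$, so $W\to Y'$ has positive-dimensional fibres and is not a log resolution of $Y$. The observation that $B_W=B_W^Y$ on such a $W$ is trivially true (both equal the crepant pullback of $K_X+B+M\sim_\Q g^*(K_Y+B_Y+M_Y)$), but it only computes log discrepancies of divisors over $X$; it says nothing about divisors over $Y$, which is what you must control to check the lc property of $(Y,B_Y+\lambda h^*D+M_Y)$. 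The analogy with \ref{S:gen-pair} is also off: that section treats generically finite pullback via Hurwitz, which does not apply to a contraction with fibres.

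The paper closes this gap by working at the single value $\lambda=t$ and tracking a witness divisor. After shrinking $Z$ near $\eta_D$, on a log resolution $X'$ the divisor $B'+tf'^*D$ has a component $S$ with coefficient $1$ mapping onto $D$. Choose $Y'\to Y$ high enough that $X'\to Y$ factors through $Y'$ and the image $T$ of $S$ on $Y'$ is a prime divisor. Since $S$ is a component of $g'^*T$, the generalised lc threshold of $g'^*T$ with respect to $(X',B'+tf'^*D+M')$ over $\eta_T$ is $\le 0$; by definition of the discriminant this forces the coefficient of $T$ in $B_{Y'}+th'^*D$ to be $\ge 1$, so $(Y,B_Y+th^*D+M_Y)$ is not sub-klt over $\eta_D$. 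The sub-lc direction follows because the discriminant coefficients are $1$ minus lc thresholds of $(X',B'+tf'^*D+M')$, all of which are $\ge 0$. Hence $s=t$. This is the missing mechanism: you must produce a divisor \emph{on a model of $Y$} with log discrepancy $0$, and the way to do that is to push $S$ down, not to pull anything up to a resolution of $X$.
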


\begin{proof}
This is essentially Lemma 6.10 of \cite{Birkar-18} where it was stated only for $M=0$.
Note that as noted just before the lemma, the assumptions imply that the moduli divisor
$M_{Y'}$ is nef for a sufficiently high resolution $Y'\to Y$, so it makes sense to consider
adjunction for $(Y,B_Y+M_Y)$ over $Z$.

Let $D$ be a prime divisor on $Z$.
Let $t$ be the largest real number such that $(X,B+{t}f^*D+M)$ is generalised lc over the generic point of $D$.
Similarly, let $s$ be the largest real number such that $(Y,B_Y+{s}h^*D+M_Y)$ is generalised lc over the generic
point of $D$. The lemma says that $s=t$ (and that a similar equality holds for prime divisors on
birational models of $Z$). Since this is a local question around the generic point of $D$,
we can shrink $Z$ around this generic point: we will not need the projectivity of $X,Y,Z$, we only need
the fact that $M',M_{Y'},M_{Z'}$ are nef over $Z$ for high resolutions $X'\to X$, $Y'\to Y$ and $Z'\to Z$.

Shrinking $Z$ as noted, we can assume that $(X',B'+tf^*D+M')$ is generalised sub-lc and that
$B'+tf^*D$ has a component $S$ with coefficient $1$ mapping onto $D$. In particular, this implies that
$(Y,B_Y+th^*D+M_Y)$ is generalised sub-lc but not generalised sub-klt over the generic point of $D$:
indeed we can assume that $X'\to Y$ factors through a high resolution $Y'\to Y$
so that the image of $S$ on $Y$, say $T$, is a prime divisor; but then by definition of
adjunction, the coefficient of $T$ in $B_{Y'}$ is $1$.
Therefore, $t=s$.
\end{proof}

\subsection{Toric varieties and toric morphisms}
A \emph{toric variety} $X$ is given by a pair $(N_X,\Sigma_X)$, where $N_X$ is a lattice and $\Sigma_X$ is a rational polyhedral fan in $N_X\otimes \mathbb{R}$. A \emph{toric morphism} from a toric variety $X$ to a toric variety $Y$ is given by a linear map $F:N_X\rightarrow N_Y$ such that its extension $F_{\mathbb{R}}:N_X\otimes \mathbb{R}\rightarrow N_Y\otimes \mathbb{R}$ sends every cone in the fan $N_X$ to a cone in the fan $\Sigma_Y$. We refer to \cite{fu93}, \cite{Oda-88} or \cite{Cox-Little-Schenck-11} for the general theory of toric varieties.

There is a bijective correspondence between the cones $\sigma$ in $\Sigma_X$ and the orbits $O(\sigma)$ in $X$.
An $m$-dimensional cone $\sigma$ corresponds to a codimension $m$ orbit $O(\sigma)$.
In particular, every ray, that is a one-dimensional cone $\sigma$ determines a torus invariant prime divisor
$\overline{O(\sigma)}$.

If $\Delta$ is the toric boundary divisor on a toric variety $X$, that is, $\Delta$ is the sum of all the
invariant prime divisors, then $(X,\Delta)$ is lc and $K_X+\Delta\sim 0$.

A toric variety $X$ is $\mathbb{Q}$-factorial if and only if the fan $\Sigma_X$ is simplicial, that is,
every cone is a simplex.

A toric Mori fibre space is a Mori fibre space $f:X\rightarrow Y$ where $f$ is a toric
morphism of toric varieties.

It is well-known that toric varieties are Mori dream spaces, that is, if $X\to Z$ is a toric contraction,
then we can run a minimal model program (MMP) on any $\R$-Cartier $\R$-divisor $D$ relatively over $Z$
and that this terminates with
either a $D$-negative fibre space or a $D$-minimal model. This can be derived from the fact that
$X$ is of Fano type over $Z$. Moreover, all the steps of the MMP are toric
because in each step of the MMP the Mori cone is generated by toric extremal rays, cf. Chapter 14 of
\cite{Matsuki} for the $\Q$-factorial case.

\begin{prop}[\cite{Cox-Little-Schenck-11} Proposition 3.3.7] \label{P:toric-isogeny}
  Let $X$ be a toric variety given by $(N_X,\Sigma_X)$.
  Let $N_{X'}$ be a sublattice of finite index in $N_X$ and let $\Sigma_{X'}=\Sigma_X$.
  Let $X'$ be the toric variety determined by $(N_{X'},\Sigma_{X'})$.
  Let $G=N_X/N_{X'}$. Then the toric morphism
  $$\phi:X'=(N_{X'},\Sigma_{X'})\rightarrow X=(N_{X},\Sigma_{X})$$
  induced by the inclusion $N_{X'}\hookrightarrow N_X$ presents $X$ as the quotient $X'/G$.
\end{prop}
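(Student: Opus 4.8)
The plan is to reduce the statement to the affine case cone by cone, compute a ring of invariants, and then glue. Write $M_X=\Hom(N_X,\Z)$ and $M_{X'}=\Hom(N_{X'},\Z)$, so that $M_X\subseteq M_{X'}$ is an inclusion of lattices of the same (finite) index as $N_{X'}\subseteq N_X$. Since $\Sigma_{X'}=\Sigma_X=:\Sigma$, for every cone $\sigma\in\Sigma$ we have the two affine charts $U_{\sigma,N_{X'}}=\Spec k[\sigma^\vee\cap M_{X'}]$ and $U_{\sigma,N_X}=\Spec k[\sigma^\vee\cap M_X]$, and by functoriality of the $U_\sigma$ construction the inclusion $N_{X'}\hookrightarrow N_X$ induces morphisms $\phi_\sigma\colon U_{\sigma,N_{X'}}\to U_{\sigma,N_X}$ compatible with the open immersions $U_{\tau,-}\hookrightarrow U_{\sigma,-}$ for faces $\tau\preceq\sigma$; these glue to $\phi\colon X'\to X$. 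Thus it suffices to prove that each $\phi_\sigma$ exhibits $U_{\sigma,N_X}$ as a quotient $U_{\sigma,N_{X'}}/G$, compatibly with restriction to faces.

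Next I would realise $G$ as a subgroup of the torus $T_{X'}$ of $X'$. The map $T_{X'}=\Hom(M_{X'},k^\times)\to T_X=\Hom(M_X,k^\times)$ dual to $M_X\hookrightarrow M_{X'}$ is surjective with kernel $\Hom(M_{X'}/M_X,k^\times)$. The natural $\Q$-valued pairing between $N_X\otimes\Q$ and $M_{X'}\otimes\Q=(N_X\otimes\Q)^\vee$ descends to a pairing $G\times(M_{X'}/M_X)\to\Q/\Z$, which is perfect: it is nondegenerate on the left because $\Hom(M_{X'},\Z)=N_{X'}$, and then surjectivity follows by comparing orders. Since $k$ is algebraically closed of characteristic zero, $\Hom(M_{X'}/M_X,\Q/\Z)\cong\Hom(M_{X'}/M_X,k^\times)$, giving a canonical isomorphism $G\xrightarrow{\sim}\Hom(M_{X'}/M_X,k^\times)\subseteq T_{X'}$. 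Hence $G$ acts on each $T_{X'}$-variety $U_{\sigma,N_{X'}}$, compatibly with restriction to faces, so these actions glue to a $G$-action on $X'$ for which the $U_{\sigma,N_{X'}}$ are $G$-invariant affine opens covering $X'$.

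The heart of the matter is the invariant computation on a single chart. On $k[\sigma^\vee\cap M_{X'}]=\bigoplus_{m\in\sigma^\vee\cap M_{X'}}k\,\chi^m$ an element $g\in G$ acts by $\chi^m\mapsto\langle g,\overline m\rangle\,\chi^m$, where $\overline m$ is the class of $m$ in $M_{X'}/M_X$. Perfectness of the pairing gives that $\chi^m$ is $G$-invariant exactly when $\overline m=0$, i.e.\ $m\in M_X$, so
$$k[\sigma^\vee\cap M_{X'}]^G=k[\sigma^\vee\cap M_X],$$
and $\phi_\sigma$ identifies $U_{\sigma,N_X}$ with $\Spec$ of this invariant ring, hence with the categorical quotient of $U_{\sigma,N_{X'}}$ by $G$ — which is a geometric quotient since $G$ is finite and $k$ has characteristic zero. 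Finally, because the $U_{\sigma,N_{X'}}$ form a $G$-invariant affine open cover of $X'$ and the local quotients $U_{\sigma,N_{X'}}/G=U_{\sigma,N_X}$ glue along faces precisely as the charts of $X$ do, the geometric quotient $X'/G$ exists, equals $X$, and its quotient morphism is $\phi$ (so $\phi$ is finite and surjective and restricts to the isogeny $T_{X'}\to T_X$ on tori). I expect the only delicate point to be bookkeeping the duality isomorphism $G\cong\Hom(M_{X'}/M_X,k^\times)$ together with the compatibility of all maps under passage to faces, and checking that the relevant quotients are genuinely geometric — which holds because $G$ is finite, $\operatorname{char}k=0$, and every $G$-orbit lies in one of the affine invariant charts; there is no genuinely hard step.
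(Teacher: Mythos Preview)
The paper does not give its own proof of this proposition: it is quoted verbatim from \cite{Cox-Little-Schenck-11}, Proposition 3.3.7, and used as a black box. So there is nothing in the paper to compare your argument against.

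That said, your argument is correct and is essentially the standard proof one finds in the reference. The reduction to affine charts, the identification of $G=N_X/N_{X'}$ with $\ker(T_{X'}\to T_X)=\Hom(M_{X'}/M_X,k^\times)$ via the perfect $\Q/\Z$-pairing, the eigenbasis computation $k[\sigma^\vee\cap M_{X'}]^G=k[\sigma^\vee\cap M_X]$, and the gluing along faces are all sound. One cosmetic point: when you write that $g$ acts by $\chi^m\mapsto\langle g,\overline m\rangle\,\chi^m$, the symbol $\langle g,\overline m\rangle$ should be read as the image of the $\Q/\Z$-pairing under a fixed embedding $\Q/\Z\hookrightarrow k^\times$ (roots of unity), not as the $\Q/\Z$-value itself; you clearly intend this, but it would be worth saying so explicitly.
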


Next we consider toric contractions over a torus.

\begin{lem}\label{l-contraction-over-torus}
Let $f\colon X\to Z$ be a toric contraction of toric varieties of dimensions $d,e$ respectively.
If $Z$ is the torus $(k^*)^e$, then $X$ is isomorphic to $F\times Z$ where $F$ is a fibre of $f$.
\end{lem}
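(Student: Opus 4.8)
The plan is to argue entirely on the level of fans. Write $X=(N_X,\Sigma_X)$, $Z=(N_Z,\Sigma_Z)$, and let $F\colon N_X\to N_Z$ be the lattice map inducing $f$. Since $Z=(k^*)^e$ is the torus, its fan $\Sigma_Z$ consists only of the zero cone $\{0\}$; hence the compatibility condition for a toric morphism imposes nothing beyond $F_{\mathbb R}$ being a well-defined linear map, and $f$ being a contraction (so surjective with connected fibres) forces $F$ to be surjective with $N_F:=\ker F$ a saturated sublattice of $N_X$ of rank $d-e$. The fibre $F$ over the identity point of the torus is the toric variety with lattice $N_F$ and fan $\Sigma_F=\{\sigma\in\Sigma_X : \sigma\subseteq (N_F)_{\mathbb R}\}$; this is standard for toric fibrations over a torus.

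First I would produce a splitting $N_X\cong N_F\oplus N_Z$ as lattices: because $N_F$ is saturated and $N_X/N_F\cong N_Z$ is free, the short exact sequence $0\to N_F\to N_X\to N_Z\to 0$ splits, so there is a section $s\colon N_Z\to N_X$ with $F\circ s=\mathrm{id}$. This identifies $N_X$ with $N_F\times N_Z$ and makes $X$, as a torus, the product of the tori of $N_F$ and $N_Z$. The key point is then to check that under this identification the fan $\Sigma_X$ becomes exactly $\Sigma_F\times\{0\}$, i.e. every cone of $\Sigma_X$ already lies in $(N_F)_{\mathbb R}$. This is where I expect the real content to sit: a priori a cone $\sigma\in\Sigma_X$ could have $F_{\mathbb R}(\sigma)$ a nonzero cone, which is fine for the morphism condition but would obstruct the product description.

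To rule that out I would use that $f$ is a \emph{contraction}, so $f_*\mathcal O_X=\mathcal O_Z$; equivalently $f$ has connected fibres and no nontrivial factorization through a finite cover of $Z$. Concretely, $f_*\mathcal O_X=\mathcal O_Z$ forces $F$ to be surjective \emph{as a map of lattices} (not merely after tensoring with $\mathbb Q$), and it forces the image fan $F_{\mathbb R}(\Sigma_X)$ to be a fan in $N_Z$ refining $\Sigma_Z=\{0\}$; but the only fan refining $\{0\}$ is $\{0\}$ itself, so $F_{\mathbb R}(\sigma)=\{0\}$ for every $\sigma\in\Sigma_X$, i.e. $\sigma\subseteq(N_F)_{\mathbb R}$. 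With the splitting from the previous step, this says $\Sigma_X=\Sigma_F\times\{0\}$ inside $(N_F\oplus N_Z)_{\mathbb R}$, which is precisely the fan of $F\times Z$. Hence $X\cong F\times Z$ compatibly with the projections, and the isomorphism is toric.

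I should also note why $F$ is honestly a fibre and the identification is canonical enough: once $\Sigma_X=\Sigma_F\times\{0\}$, the morphism $f$ is the projection $N_F\oplus N_Z\to N_Z$, whose scheme-theoretic fibre over the identity is the toric variety of $(N_F,\Sigma_F)$, and all fibres are isomorphic to it since the torus $(k^*)^e$ acts transitively on $Z$. The only mild subtlety is keeping track of lattice (as opposed to just $\mathbb Q$-vector space) statements throughout—surjectivity of $F$, saturatedness of $\ker F$, and the splitting—all of which follow from $f_*\mathcal O_X=\mathcal O_Z$; this bookkeeping, rather than any deep geometry, is the main thing to get right.
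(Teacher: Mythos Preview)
Your overall strategy matches the paper's: establish surjectivity of the lattice map from the contraction property, split the short exact sequence of lattices using freeness of $N_Z$, and identify $\Sigma_X$ with $\Sigma_F\times\{0\}$. The paper does exactly this.

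However, you have misplaced the nontrivial step. You write that ``the compatibility condition for a toric morphism imposes nothing beyond $F_{\mathbb R}$ being a well-defined linear map'' and that ``a priori a cone $\sigma\in\Sigma_X$ could have $F_{\mathbb R}(\sigma)$ a nonzero cone, which is fine for the morphism condition.'' This is false: by definition a toric morphism requires that for every $\sigma\in\Sigma_X$ the image $F_{\mathbb R}(\sigma)$ lies in some cone of $\Sigma_Z$, and since $\Sigma_Z=\{0\}$ this forces $F_{\mathbb R}(\sigma)=\{0\}$, i.e.\ $\sigma\subseteq (N_F)_{\mathbb R}$, immediately. The paper states this directly (phrased geometrically as ``every torus invariant subvariety of $X$ maps onto $Z$''). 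Your subsequent argument invoking $f_*\mathcal O_X=\mathcal O_Z$ and an ``image fan refining $\Sigma_Z$'' is both unnecessary and not well-posed as written (there is no general notion of image fan, and the refinement claim is not a standard consequence of the contraction condition).

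So the contraction hypothesis is used \emph{only} to get honest lattice surjectivity of $F$ (otherwise $f$ would factor through a nontrivial finite cover of $Z$, contradicting connected fibres), exactly as the paper argues via Proposition~\ref{P:toric-isogeny}. Once you correct this, your proof is the paper's proof.
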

\begin{proof}
Assume $X$ is given by $(N_X,\Sigma_X)$ and $Z$ is given by $(N_Z,\Sigma_Z)$ and $f$ given by the
$\Z$-linear map $\pi\colon N_X\to N_Z$. Since $Z$ is the torus, $\Sigma_Z$ consists of only the
zero-dimensional cone. Since $f$ is a contraction, it is surjective, so the image of $\pi$, say $L$, 
is a sublattice of $N_Z$ of finite index. In fact, $L=N_Z$, so $\pi$ is surjective otherwise
 $f$ factors as $(N_X,\Sigma_X)\to (L,\Sigma_Z)\to (N_Z,\Sigma_Z)$
with $(L,\Sigma_Z)\to (N_Z,\Sigma_Z)$ being a finite morphism of degree $>1$ by Proposition \ref{P:toric-isogeny},
a contradiction.

Let $N_F$ be the kernel of $\pi$. Since $Z$ is the torus, every torus invariant subvariety of $X$
maps onto $Z$, that is, each cone in $\Sigma_X$ is mapped to the zero cone in $\Sigma_Z$ which means 
each lattice point in $\Sigma_X$ belongs to $N_F$. 
On the other hand, since $N_Z$ is a {free} abelian group, hence a projective $\Z$-module,
by standard results in commutative algebra, the sequence $0\to N_F\to N_X\to N_Z\to 0$ splits:
indeed taking a basis of $N_F$ together with $e$ elements of $N_X$ mapping onto the basis of $N_Z$,
we get a basis of $N_X$. But then $N_X\simeq N_F\times N_Z$ and $\Sigma_X\simeq \Sigma_F\times \Sigma_Z$
where $\Sigma_F=\Sigma_X$ is considered as a fan in $N_F\otimes \R$. Therefore, $X$ is isomorphic to
$F\times Z$ where $F$ is determined by $(N_F,\Sigma_F)$.
\end{proof}

We also need a result regarding adjunction for toric pairs.

\begin{lem} \label{L:toric-adjunction}
Let $f\colon X\rightarrow Z$ a toric contraction of toric varieties.
Let $\Delta$ be the toric boundary divisor on $X$.
Consider the adjunction formula
$$
K_X+\Delta\sim f^*(K_Z+\Delta_Z+M_Z).
$$
where $\Delta_Z$ is the discriminant divisor and $M_Z$ is the moduli divisor.
Then $\Delta_Z$ is the toric boundary divisor on $Z$ and $M_Z\sim 0$.
\end{lem}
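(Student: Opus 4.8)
The plan is to pin down the discriminant divisor $\Delta_Z$ by computing lc thresholds, and then to obtain $M_Z\sim 0$ from the two relations $K_X+\Delta\sim 0$ and $K_Z+\Delta_Z\sim 0$. For the second point: once $\Delta_Z$ is identified with the toric boundary of $Z$ we have $K_Z+\Delta_Z\sim 0$, so from $f^*(K_Z+\Delta_Z+M_Z)\sim K_X+\Delta\sim 0$ and $f^*(K_Z+\Delta_Z)\sim 0$ we get $f^*M_Z\sim 0$; since $f$ is a contraction, $f_*\mathcal O_X=\mathcal O_Z$ and the projection formula give $f_*f^*M_Z=M_Z$, so pullback is injective on divisor classes (clearing denominators if $M_Z$ is only $\Q$-Cartier) and hence $M_Z\sim 0$. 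So the real content is the identification of $\Delta_Z$.

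Recall the coefficient of a prime divisor $D$ on $Z$ in $\Delta_Z$ is $1-t_D$, where $t_D$ is the largest real number with $(X,\Delta+t_Df^*D)$ lc over the generic point $\eta_D$ of $D$; since $(X,\Delta)$ is lc, $t_D\ge 0$. First I would treat a torus invariant $D=\overline{O(\tau)}$. As $f$ is equivariant for the torus actions, $f^{-1}(D)$ is invariant under the big torus of $X$, hence a union of orbit closures; moreover $D$ is Cartier near $\eta_D$, so $f^{-1}(D)$ is pure of codimension one there, and therefore its components passing through $f^{-1}(\eta_D)$ are invariant prime divisors $\overline{O(\rho)}$. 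Since $f$ is surjective, $f^{-1}(\eta_D)$ is non-empty, so at least one such $\overline{O(\rho)}$ dominates $D$; this $\overline{O(\rho)}$ occurs in $f^*D$ with positive coefficient over $\eta_D$ and occurs in $\Delta$ with coefficient $1$, so $\Delta+tf^*D$ has a component of coefficient $>1$ for every $t>0$ and $(X,\Delta+tf^*D)$ is not lc over $\eta_D$. Hence $t_D=0$, and $D$ has coefficient $1$ in $\Delta_Z$.

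Next I would treat $D$ not torus invariant. Then $D\not\subseteq Z\setminus Z_0$, where $Z_0\cong(k^*)^e$ is the big torus of $Z$, so $D_0:=D\cap Z_0$ is a prime divisor on $Z_0$. Applying Lemma \ref{l-contraction-over-torus} to the toric contraction $f^{-1}(Z_0)\to Z_0$ gives an isomorphism $f^{-1}(Z_0)\cong F\times Z_0$ over $Z_0$ with $F$ a fibre of $f$, under which $\Delta$ restricts to $\mathrm{pr}_F^*\Delta_F$ (with $\Delta_F$ the toric boundary of $F$) and $f^*D$ restricts to $\mathrm{pr}_{Z_0}^*D_0$. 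For $t>1$ the divisor $\Delta+tf^*D$ has the component $\mathrm{pr}_{Z_0}^*D_0$ with coefficient $t>1$, so it is not lc over $\eta_D$. For $t\le 1$, the pair $(F,\Delta_F)$ is lc and $(Z_0,tD_0)$ is lc over $\eta_{D_0}$ ($Z_0$ and $D_0$ being smooth there), so a log resolution of $(F,\Delta_F)$ times $Z_0$ is a log resolution of $(F\times Z_0,\mathrm{pr}_F^*\Delta_F+t\,\mathrm{pr}_{Z_0}^*D_0)$ near $f^{-1}(\eta_{D_0})$ on which all log discrepancies are $\ge 0$; hence $(X,\Delta+tf^*D)$ is lc over $\eta_D$. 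So $t_D=1$ and $D$ has coefficient $0$ in $\Delta_Z$. Together with the previous paragraph this gives that $\Delta_Z$ is the toric boundary of $Z$, and then the first paragraph yields $M_Z\sim 0$.

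The step I expect to be the main obstacle is the inequality $t_D\ge 1$ in the non-torus-invariant case, that is, checking that adding $t\le 1$ times the pullback of $D_0$ does not destroy the lc property of $(X,\Delta)$ near $f^{-1}(\eta_D)$. This is exactly where one uses the product structure from Lemma \ref{l-contraction-over-torus} together with the standard fact that on a product the log discrepancies of a boundary of the form $\mathrm{pr}_1^*B_1+\mathrm{pr}_2^*B_2$ are governed by those of the factors $(Y_1,B_1)$ and $(Y_2,B_2)$; by comparison, the torus invariant case and the Picard group argument are formal.
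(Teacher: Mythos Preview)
Your proof is correct and follows essentially the same approach as the paper: reduce to identifying $\Delta_Z$, compute the coefficient $1$ for torus invariant $D$ by exhibiting an invariant component of $f^*D$ (already in $\Delta$) dominating $D$, compute the coefficient $0$ for non-invariant $D$ by shrinking $Z$ to its torus and invoking the product decomposition of Lemma~\ref{l-contraction-over-torus}, and then deduce $M_Z\sim 0$ from $f^*M_Z\sim 0$ using that $f$ is a contraction. The paper also records a short alternative argument (compactify and use that $M_Z$ is pseudo-effective together with $\Delta_Z-\Theta_Z+M_Z\sim 0$), but your direct computation matches the paper's primary proof.
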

\begin{proof}
First note that in the adjunction formula we wrote $\sim$ instead of $\sim_\Q$.
This is because $K_X+\Delta\sim 0$, so we can choose $M_Z$ so that the formula
is given by $\sim$ instead of $\sim_\Q$.
On the other hand,
it is enough to show that $\Delta_Z$ is the toric boundary divisor on $Z$ because then
$$
0\sim K_X+\Delta\sim f^*(K_Z+\Delta_Z+M_Z)\sim f^*M_Z,
$$
and since $f$ is a contraction this implies that $M_Z\sim 0$.

Let $D$ be a prime divisor on $Z$. Assume $D$ is torus invariant. Then since $f$ is surjective,
there is a torus invariant
subvariety of $X$ mapping onto $D$. Thus the lc threshold $t$ of $f^*D$ with respect to
$(X,\Delta)$ over the generic point of $D$ is $0$, hence the coefficient of $D$ in $\Delta_Z$
is $1$. Thus the toric boundary divisor of $Z$ is contained in the reduced part of $\Delta_Z$.

Now assume $D$ is not an invariant divisor. Shrinking $Z$ we can assume that $Z$ is the torus of dimension
$\dim Z$. Thus $X$ is isomorphic to $F\times Z$ where $F$ is a fibre of $f$, by Lemma \ref{l-contraction-over-torus}.
In particular, the lc threshold $t$ of $f^*D$ with respect to
$(X,\Delta)$ over the generic point of $D$ is $1$, hence the coefficient of $D$ in $\Delta_Z$
is $0$.

An alternative argument is to compactify $X,Z$ and extend $f$ so that $X,Z$ are projective.
In this case, if $\Theta_Z$ is the toric boundary on $Z$, then the first two paragraphs
show that $\Delta_Z-\Theta_Z\ge 0$ and that $\Delta_Z-\Theta_Z+M_Z\sim 0$.
This implies that $\Delta_Z-\Theta_Z=0$ and that $M_Z\sim 0$ as $M_Z$ is pseudo-effective.
\end{proof}


\section{Proof of main results}

We start with showing that a generalised version of Shokurov conjecture holds in relative dimension one.

\begin{lem} \label{L:C}
For $\epsilon\in \mathbb{R}_{>0}$, there exists $\delta\in \mathbb{R}_{>0}$ depending only on $\epsilon$
satisfying the following. Assume that
\begin{itemize}
\item $(X,B+M)$ { is projective generalised } $\epsilon$-lc with data $X'\xrightarrow{\phi} X$ and $M'$,

\item $X\to Z$ is a contraction with $\dim X-\dim Z=1$,

\item $K_X+B+M\sim_{\mathbb{Q}}0/Z,$ { where } $B,M'$ { are } $\mathbb{Q}$-divisors, and

\item $X$ is of Fano type over $Z$.
\end{itemize}
Then in the adjunction formula
$$
K_X+B+M\sim_{\mathbb{Q}}f^*(K_Z+B_Z+M_Z),
$$
$(Z,B_Z+M_Z)$ is generalised $\delta$-lc.
\end{lem}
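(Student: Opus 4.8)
The plan is to reduce to a very concrete one-dimensional base computation. Since everything is local over $Z$ around the generic point of a prime divisor $D$, I would first shrink $Z$ to a smooth curve (or just to the generic point of $D$ together with a neighbourhood), so that $Z$ is a smooth affine curve, $K_Z = 0$ can be arranged, and we need only bound the coefficient of a single closed point $z \in Z$ in $B_Z + M_Z$ by $1 - \delta$. Because $X$ is of Fano type over $Z$ and $\dim X - \dim Z = 1$, running a relative MMP on a suitable divisor reduces us to the case where $X \to Z$ is a Mori fibre space with one-dimensional fibres, that is, $X \to Z$ is a conic bundle or a $\mathbb{P}^1$-bundle over (the relevant piece of) $Z$; the generic fibre is $\mathbb{P}^1$. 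Passing to such a model does not decrease generalised log discrepancies on $Z$ since it only modifies $X$ over $Z$, and the discriminant divisor $B_Z$ can only get bigger or stay the same under such an MMP (the standard behaviour of $B_Z$ under birational modifications of the source), so an upper bound on coefficients after the MMP gives one before.

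Having reduced to a conic bundle, the generic fibre $F \cong \mathbb{P}^1$ carries the restriction $(F, B_F + M_F)$ which is generalised $\epsilon$-lc with $K_F + B_F + M_F \sim_{\mathbb{Q}} 0$. On $\mathbb{P}^1$ this forces $\deg(B_F + M_F) = 2$ with each coefficient of $B_F$ at most $1 - \epsilon$ and $M_F \geq 0$; hence the number of horizontal components of $B$ meeting $F$, together with their coefficients, is controlled — in particular $B$ has at most $2/\epsilon$ horizontal components and $\deg M_F \leq 2$. This is exactly the regime where $(F, \operatorname{Supp} B|_F)$ is log bounded, so one is essentially inside the hypotheses of the already-stated Theorem on Shokurov's conjecture; but here, since the base is a curve, I would prefer a direct argument: the coefficient $1 - t$ of $z$ in $B_Z$ is governed by the lc threshold $t$ of $f^* z$ over the generic point of $D$, which in turn is read off from the multiplicities of the fibre components of $f^* z$ and the coefficients of the horizontal part of $B$ through them. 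Using $\epsilon$-lc-ness of $(X, B+M)$ upstairs — concretely, examining the generalised log discrepancies of the divisors obtained by blowing up the components of the central fibre — one bounds the fibre multiplicities and hence $t$ away from $0$, giving $1 - t \leq 1 - \delta$. For the moduli part $M_Z$ on a curve one checks $M_Z \geq 0$ and that its coefficient at $z$ is likewise bounded using that $M'$ is nef and $M_F \geq 0$ of bounded degree.

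The main obstacle I expect is making the bound on the central fibre multiplicities and on the coefficient of $z$ in $M_Z$ effective purely in terms of $\epsilon$, uniformly over all conic bundle configurations: the combinatorics of how a reducible degenerate fibre, together with a non-toric horizontal boundary $B$, contributes to the lc threshold of $f^*z$ can be intricate, and one must rule out the coefficient of $z$ in $B_Z + M_Z$ creeping up to $1$ through either a very singular central fibre or a concentration of the moduli part. I would handle this by the standard device of extracting, over $z$, a divisor computing the lc threshold, pulling back the $\epsilon$-lc inequality along the resulting birational model of $X$, and translating it into a linear inequality on the multiplicities; the relative dimension being exactly one keeps this bookkeeping finite and explicit, which is why the lemma is isolated as the base case of the induction.
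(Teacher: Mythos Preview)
Your approach is genuinely different from the paper's, and the part you flag as ``the main obstacle'' is precisely where your argument remains incomplete.

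The paper does \emph{not} reduce to a conic bundle or attempt any direct fibre-multiplicity computation. Instead it uses a single trick to eliminate the nef part $M'$ and reduce to the already-known case of ordinary pairs. Since $X$ is of Fano type over $Z$, one has $-K_X$ big over $Z$, so on a resolution $\phi\colon X'\to X$ one writes $\phi^*(-K_X)\sim_{\mathbb{Q}} H'+C'/Z$ with $H'$ ample and $C'\ge 0$. For small rational $u>0$ one then picks a general $0\le L'\sim_{\mathbb{Q}}(1-u)M'+uH'$ and sets $\Delta'=uE'+(1-u)B'+uC'+L'$, where $K_{X'}+E'=\phi^*K_X$. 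A short computation shows $K_{X'}+\Delta'\sim_{\mathbb{Q}}0/Z$ and that the pushdown $(X,\Delta)$ is an \emph{ordinary} $\tfrac{\epsilon}{2}$-lc pair whose discriminant on any $Z'$ is, for $u$ small, arbitrarily close to the generalised discriminant $B_{Z'}$. One then simply cites \cite[Corollary~1.7]{Birkar-14}.

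Your outline, by contrast, keeps the nef part $M'$ throughout and proposes to bound the lc threshold of $f^*z$ and the coefficient of $z$ in $M_Z$ by direct inspection of the degenerate fibre. You correctly note that Theorem~1.3 as stated applies only to ordinary pairs, and then defer to a ``direct argument'' that you do not carry out. The difficulty is real: controlling the contribution of $M'$ to the generalised lc threshold on a possibly complicated tree of rational curves, uniformly in $\epsilon$, is essentially what \cite{Birkar-14} does in the ordinary case, and you would need to redo that analysis in the generalised setting. The paper's absorption trick is exactly what lets one avoid this --- it trades the b-nef divisor $M'$ for an honest effective divisor using the bigness of $-K_X$, so that the existing result applies verbatim. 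That idea is missing from your proposal, and without it (or a full replacement argument) the proof is not complete.
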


\begin{proof}
Let $D$ be a prime divisor on some high resolution $Z'\to Z$. We can assume $X'\to X$ is a high resolution so that
$f'\colon X'\to Z'$ is a morphism. Write $K_{X'}+B'+M'$ for the pullback of $K_X+B+M$.
 We can assume that $(X',\Supp B'\cup \Supp f'^*D)$ is log smooth.
We want to show that the coefficient of $D$ in $B_{Z'}$ is bounded from above
away from $1$. This coefficient can simply be calculated from the coefficients of $B'$ and $f'^*D$.
By definition of adjunction, it is $1-t$ where $t$ is the lc threshold of $f'^*D$
with respect to $(X',B')$ over the generic point of $D$.

We can assume $X$ is $\Q$-factorial. Since $X$ is of Fano type over $Z$, $-K_X$ is big over $Z$.
So we can write
$$
\phi^*(-K_X)\sim_\Q H'+C'/Z
$$
where $H'$ is ample and $C'\ge 0$. Also write $\phi^*K_X=K_{X'}+E'$ where it is easy to see that $E'\le B'$.
For each small rational number $u>0$ we can then find a general
$$
0\le L'\sim_\Q (1-u)M'+uH'
$$
so that letting
$$
\Delta'=uE'+(1-u)B'+uC'+L',
$$
the sub-pair $(X',\Delta')$ is sub-$\frac{\epsilon}{2}$-lc as
$$
\Delta'\le B'+uC'+L'.
$$
Then over $Z$ we have
$$
K_{X'}+\Delta'=K_{X'}+uE'+(1-u)B'+uC'+L'
$$
$$
\sim_\Q K_{X'}+uE'+(1-u)B'+uC'+(1-u)M'+uH'
$$
$$
\sim_\Q K_{X'}+uE'+(1-u)B'+(1-u)M'-u(K_{X'}+E')
$$
$$
=(1-u)(K_{X'}+B'+M')\sim_\Q 0.
$$
In particular, letting $\Delta=\phi_*\Delta'$, we deduce that $K_{X'}+\Delta'$ is the pullback of $K_X+\Delta$
which implies that $(X,\Delta)$ is $\frac{\epsilon}{2}$-lc.

Now if we choose $u$ to be sufficiently small, then we can assume that the lc threshold $s$ of
$f'^*D$ with respect to $(X',\Delta')$ over the generic point of $D$ is sufficiently close to $t$.
Thus the coefficient of $D$ in the discriminant divisor
$\Delta_{Z'}$, given by adjunction for $(X,\Delta)$ over $Z$, is sufficiently close to the coefficient of $D$
in $B_{Z'}$. Now we can apply Corollary 1.7 of \cite{Birkar-14} to deduce that this coefficient is bounded
from above away from $1$.
\end{proof}

\begin{lem} \label{L:relative-dim}
Let $d,r$ be natural numbers.
Assume that Theorem \ref{T:A} holds when

\begin{itemize}
  \item $\dim X\le d-1$, and also when
  \item  $\dim X= d$ and the relative dimension $\dim X-\dim Z=r$.
\end{itemize}
Then the theorem holds in dimension $d$ when $f:X\rightarrow Z$ can be factored as
$X\xrightarrow{h}V \xrightarrow{g}Z$ where $h,g$ are toric contractions and $h$ is of relative dimension $r$.
\end{lem}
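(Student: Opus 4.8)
The plan is to run the canonical bundle formula in two stages through $V$, applying Theorem \ref{T:A} once to $h$ and once to $g$, and then to glue the two applications by Lemma \ref{l-adj-composition}. We may assume $r\ge 1$, so that $\dim V=d-r\le d-1$. First I would apply Theorem \ref{T:A} to $h\colon X\to V$ with the given data $(X,\Delta)$, $(X,B+M)$: here $V$ is a normal toric variety with torus invariant divisor $\Delta_V$, $h$ is a toric contraction, $K_X+B+M\sim_{\mathbb{Q}}0/V$ because it is $\sim_{\mathbb{Q}}0/Z$, $\dim X=d$, and $\dim X-\dim V=r$, so this is exactly one of the cases assumed to hold. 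It produces rational numbers $\alpha_1,\delta_1\in(0,1)$ depending only on $d,\epsilon$ (with $r$ fixed) such that, setting $\Gamma^{\alpha_1}=\alpha_1 B+(1-\alpha_1)\Delta$ and $N^{\alpha_1}=\alpha_1 M$ and writing $K_X+\Gamma^{\alpha_1}+N^{\alpha_1}\sim_{\mathbb{Q}}h^*(K_V+\Gamma_V^{\alpha_1}+N_V^{\alpha_1})$ for the adjunction, the generalised pair $(V,\Gamma_V^{\alpha_1}+N_V^{\alpha_1})$ is generalised $\delta_1$-lc. Here $\Gamma_V^{\alpha_1}$ is effective (as in Lemma \ref{L:toric-adjunction}: non-invariant prime divisors of $V$ acquire coefficient $0$), its boundary and nef part are $\mathbb{Q}$-divisors with a sufficiently high trace of the nef part globally nef by the discussion preceding Lemma \ref{l-adj-composition} and \cite{Filipazzi-18}, and by Lemma \ref{l-adj-composition} applied to $X\xrightarrow{h}V\xrightarrow{g}Z$ one has $K_V+\Gamma_V^{\alpha_1}+N_V^{\alpha_1}\sim_{\mathbb{Q}}g^*(K_Z+\Gamma_Z^{\alpha_1}+N_Z^{\alpha_1})$, in particular $K_V+\Gamma_V^{\alpha_1}+N_V^{\alpha_1}\sim_{\mathbb{Q}}0/Z$.

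Next I would apply Theorem \ref{T:A} again, this time to $g\colon V\to Z$ with the data $(V,\Delta_V)$ and the generalised pair $(V,\Gamma_V^{\alpha_1}+N_V^{\alpha_1})$, in dimension $d-r$ and with $\epsilon$ replaced by $\delta_1$; all hypotheses hold by the previous step. This yields rational numbers $\alpha_2,\delta_2\in(0,1)$ depending only on $d-r$ and $\delta_1$, hence only on $d,\epsilon$, such that, putting $\Xi=\alpha_2\Gamma_V^{\alpha_1}+(1-\alpha_2)\Delta_V$ and $Q=\alpha_2 N_V^{\alpha_1}$ and writing $K_V+\Xi+Q\sim_{\mathbb{Q}}g^*(K_Z+\Xi_Z+Q_Z)$ for the adjunction, the generalised pair $(Z,\Xi_Z+Q_Z)$ is generalised $\delta_2$-lc.

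Finally I would take $\alpha=\alpha_1\alpha_2$ and $\delta=\delta_2$, which are rational, lie in $(0,1)$, and depend only on $d,\epsilon$, and check that they work. On $X$ one has the identities $\Gamma^{\alpha}=\alpha B+(1-\alpha)\Delta=\alpha_2\Gamma^{\alpha_1}+(1-\alpha_2)\Delta$ and $N^{\alpha}=\alpha M=\alpha_2 N^{\alpha_1}$, so $(X,\Gamma^{\alpha}+N^{\alpha})$ is the convex combination $\alpha_2(X,\Gamma^{\alpha_1}+N^{\alpha_1})+(1-\alpha_2)(X,\Delta)$. Letting $(V,\Gamma_V^{\alpha}+N_V^{\alpha})$ be the adjunction for $(X,\Gamma^{\alpha}+N^{\alpha})$ over $V$, Lemma \ref{l-adj-composition} identifies the generalised pair $(Z,\Gamma_Z^{\alpha}+N_Z^{\alpha})$ in the statement of Theorem \ref{T:A} with the one obtained by adjunction for $(V,\Gamma_V^{\alpha}+N_V^{\alpha})$ over $Z$, so it suffices to bound the latter. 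Now generalised log discrepancies are additive under convex combinations of generalised sub-pairs, and by Lemma \ref{L:toric-adjunction} the discriminant b-divisor of $(X,\Delta)$ over $V$ is the toric boundary b-divisor of $V$; hence, by the resulting super-additivity of log canonical thresholds (equivalently sub-additivity of discriminants) under convex combinations, the boundary b-divisor of $(V,\Gamma_V^{\alpha}+N_V^{\alpha})$ is at most $\alpha_2$ times the boundary b-divisor of $(V,\Gamma_V^{\alpha_1}+N_V^{\alpha_1})$ plus $(1-\alpha_2)$ times the toric boundary b-divisor of $V$, that is, at most the boundary b-divisor of $(V,\Xi+Q)$. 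Consequently $(V,\Gamma_V^{\alpha}+N_V^{\alpha})$ and $(V,\Xi+Q)$ are both generalised sub-lc over the generic point of $Z$, with the former having generalised log discrepancies everywhere at least those of the latter; and since a discriminant coefficient over $Z$ is a minimum of quotients of generalised log discrepancies by multiplicities that do not depend on the chosen sub-pair, the discriminant b-divisor of $(V,\Gamma_V^{\alpha}+N_V^{\alpha})$ over $Z$ has, on every birational model of $Z$, coefficients at most those of the discriminant b-divisor of $(V,\Xi+Q)$ over $Z$, namely at most $1-\delta_2$. Therefore $(Z,\Gamma_Z^{\alpha}+N_Z^{\alpha})$ is generalised $\delta$-lc, as wanted.

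I expect the steps above to involve only routine verifications rather than a genuine obstacle. The two points needing care are: verifying, at each application of Theorem \ref{T:A}, that the generalised pair fed in satisfies all hypotheses --- in particular the effectivity, rationality, and high global nefness of the moduli part produced by adjunction, which is where Lemma \ref{L:toric-adjunction}, \cite{Filipazzi-18}, and the remarks before Lemma \ref{l-adj-composition} enter, and where one uses the theorem in the form valid for contractions $X\to Z$ rather than only for projective $X$; and establishing the additivity of generalised log discrepancies together with the consequent super-additivity and monotonicity of discriminants of adjunction under convex combinations, all of which follow immediately from the definitions. The single conceptual ingredient is the choice $\alpha=\alpha_1\alpha_2$, which is precisely what makes the combination on $X$ and the combination on $V$ correspond, combined with Lemma \ref{l-adj-composition} guaranteeing that the adjunction over $Z$ is the same whether computed directly from $X$ or in two stages through $V$.
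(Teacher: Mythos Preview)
Your proposal is correct and follows essentially the same approach as the paper's proof: apply Theorem~\ref{T:A} first to $h$ and then to $g$, take $\alpha$ to be the product of the two resulting constants, invoke Lemma~\ref{l-adj-composition} to factor the adjunction through $V$, and use convexity of (generalised) lc thresholds together with Lemma~\ref{L:toric-adjunction} to compare the discriminant b-divisor of $(V,\Gamma_V^{\alpha}+N_V^{\alpha})$ with that of $(V,\Xi+Q)$. One small slip: the effectivity of $\Gamma_V^{\alpha_1}$ is not because non-invariant primes get coefficient~$0$ (that is the conclusion of Lemma~\ref{L:toric-adjunction} for the \emph{toric} pair $(X,\Delta)$, not for $(X,\Gamma^{\alpha_1}+N^{\alpha_1})$); rather it holds simply because $(X,\Gamma^{\alpha_1}+N^{\alpha_1})$ is generalised lc, so every lc threshold is $\le 1$ and every discriminant coefficient is $\ge 0$.
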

\begin{proof}
By assumption, $h$ is of relative dimension $r\ge 1$, hence
$\dim V\le d-1$. Therefore, by assumption,
Theorem \ref{T:A} holds for both $g,h$ in the following sense.
There exist $\lambda,\xi\in \mathbb{Q}_{>0}$ depending only on $d,\epsilon$ such that
letting $\Gamma^{\lambda}=\lambda B+(1-\lambda)\Delta$ and $N^{\lambda}=\lambda M$, in the adjunction formula
  $$
  K_X+\Gamma^{\lambda}+N^{\lambda}\sim_{\mathbb{Q}}h^*(K_V+\Gamma_V^{\lambda}+N_V^{\lambda}),
  $$
$(V,\Gamma_V^{\lambda}+N_V^{\lambda})$ is generalised $\xi$-lc.
On the other hand, there exist $\beta,\delta\in \mathbb{Q}_{>0}$ depending on $d-r,\xi$, and thus on $d,\epsilon$, such that letting
$$
\Omega_V^{\beta}=\beta \Gamma_V^{\lambda}+(1-\beta)\Delta_V, \ \ \ L_V^{\beta}=\beta N_V^\lambda,
$$
  where $\Delta_V$ is the torus invariant divisor of $V$,
  in the adjunction formula
  $$K_V+\Omega_V^{\beta}+L_V^{\beta}\sim_{\mathbb{Q}}g^*(K_Z+\Omega_Z^{\beta}+L_Z^{\beta}),$$
  $(Z,\Omega_Z^{\beta}+L_Z^{\beta})$ is generalised $\delta$-lc.

  Now let $\alpha=\lambda\beta$,
  $$\Gamma^{\alpha}=\alpha B+(1-\alpha)\Delta, \ \ \ N^{\alpha}=\alpha M
  $$
and consider the adjunction formulas
  $$
  K_X+\Gamma^{\alpha}+N^{\alpha}\sim_{\mathbb{Q}}f^*(K_Z+\Gamma_Z^{\alpha}+N_Z^{\alpha})
  $$
  and
    $$
  K_X+\Gamma^{\alpha}+N^{\alpha}\sim_{\mathbb{Q}}h^*(K_V+\Gamma_V^{\alpha}+N_V^{\alpha}).
  $$

We claim that $(Z,\Gamma_Z^{\alpha}+N_Z^{\alpha})$ is generalised $\delta$-lc. By Lemma \ref{l-adj-composition},
$(Z,\Gamma_Z^{\alpha}+N_Z^{\alpha})$ is also the generalised pair given by adjunction for
$(V,\Gamma_V^{\alpha}+N_V^{\alpha})$ over $Z$. It is then enough to show $(V,\Gamma_V^{\alpha}+N_V^{\alpha})$ has ``better'' singularities than $(V,\Omega_V^{\beta}+L_V^{\beta})$ in a sense that we make precise below.

Let $V'\to V$ be any resolution and $D$ be a prime divisor on $V'$. Taking $X'$ high enough we can assume that the nef 
part $M'$ is on $X'$ and that 
the induced map $h':X'\bir V'$ is a morphism. Let $t$ be the generalised lc threshold of $h'^*D$ with respect to
  $(X',\Gamma'^\lambda+N'^\lambda)$ over the generic point of $D$ where 
  $K_{X'}+\Gamma'^\lambda+N'^\lambda$ is the pullback of   $K_{X}+\Gamma^\lambda+N^\lambda$; more precisely, 
  if $K_{X'}+B'+M'$ is the pullback of $K_X+B+M$ and if $K_{X'}+\Delta'$ is the pullback of $K_X+\Delta$, then  
  $$
  \Gamma'^\lambda=\lambda B'+(1-\lambda)\Delta'  ~~~~~~\mbox{and}~~~~~ N'^\lambda=\lambda M'.
  $$
  By definition, $1-t=b$,
  where $b$ is the coefficient of $D$ in $\Gamma_{V'}'^\lambda$.
  Similarly, let $s$ be the lc threshold of $h'^*D$ with respect to $(X',\Delta')$ over the generic point of $D$. Then $1-s=c$, where $c$ is the coefficient of $D$ in  $\Delta_{V'}$ where $(V',\Delta_{V'})$ is determined by adjunction applied to 
  $(X,\Delta)$ over $Z$. Note however that, by Lemma \ref{L:toric-adjunction}, $K_{V'}+\Delta_{V'}$ is just the pullback of 
  $K_V+\Delta_V$ and $\Delta_V$ is the toric boundary divisor.

  Now
  $$
  (X',\beta \Gamma'^\lambda+\beta N'^\lambda+\beta t h'^*D+(1-\beta)\Delta'+(1-\beta)sh'^*D)
  $$
  is generalised lc over the generic point of $D$, with nef part $\beta N'^\lambda$. 
  That is, the generalised lc threshold of
  $h'^*D$ with respect to
  $$
  (X',\beta \Gamma'^\lambda+\beta N'^\lambda+(1-\beta)\Delta')
  $$
  is at least $\beta t+(1-\beta)s$. But then assuming $K_{X'}+\Gamma'^\alpha+N'^\alpha$ is the 
  pullback of $K_{X}+\Gamma^\alpha+N^\alpha$, we have 
  $$
  \beta \Gamma'^\lambda+(1-\beta)\Delta'=\beta\lambda B'+\beta(1-\lambda)\Delta'+(1-\beta)\Delta'
  =\beta\lambda B'+(1-\beta\lambda)\Delta'=\Gamma'^\alpha,
  $$
  and $\beta N'^\lambda=\beta\lambda M'=N'^\alpha$. So the generalised lc threshold of
  $h'^*D$ with respect to
  $$
  (X',\Gamma'^\alpha+N'^\alpha)
  $$
  is at least $\beta t+(1-\beta)s$.
  Therefore, the coefficient of $D$ in $\Gamma'^\alpha_{V'}$ is
  $$
  \begin{array}{l}\leq 1-\beta t-(1-\beta)s=\beta(1-t)+(1-\beta)(1-s)=\beta b+(1-\beta)c\\
  =\text{coefficient of }D \text{ in }\beta \Gamma'^\lambda_{V'}+(1-\beta)\Delta_{V'}.\end{array}
  $$
Thus we have proved that
$$
\Gamma_{V'}^{\alpha}\leq \beta \Gamma_{V'}^\lambda+(1-\beta)\Delta_{V'}=\Omega_{V'}^{\beta}
$$
where $K_{V'}+\Omega_{V'}^\beta+L^\beta_{V'}$ is the pullback of $K_{V}+\Omega_{V}^\beta+L^\beta_{V}$.

Now let $Z'\to Z$ be any high resolution and $S$ be a prime divisor on $Z'$. Take a high log resolution $V'\to V$ so that 
the induced map 
$g'\colon V'\bir Z'$ is a morphism and so that the nef parts of $(V,\Gamma_V^{\alpha}+N_V^{\alpha})$ and $(V,\Omega_V^{\beta}+L_V^{\beta})$ descent to $V'$. Since
$$
\Gamma_{V'}^{\alpha}\leq \beta \Gamma_{V'}^\lambda+(1-\beta)\Delta_{V'}=\Omega_{V'}^{\beta},
$$
 the generalised lc threshold of $g'^*S$ with respect to
$(V',\Gamma^{\alpha}_{V'}+N^{\alpha}_{V'})$ over the generic point of $S'$ is more than or equal to
the generalised lc threshold of $g'^*S$ with respect to $(V',\Omega^{\beta}_{V'}+L^{\beta}_{V'})$ 
over the generic point of $S'$. This implies that $\Gamma_{Z'}^{\alpha}\leq \Omega_{Z'}^{\beta}$. 
That is, the pair $(Z,\Gamma_Z^{\alpha}+N_Z^{\alpha})$ has ``better'' singularities than
$(Z,\Omega_Z^{\beta}+L_Z^{\beta})$. So $(Z,\Gamma_Z^{\alpha}+N_Z^{\alpha})$ is generalised $\delta$-lc as claimed.
\end{proof}

\begin{lem} \label{L:Pn}
Let $r\in \N$ and $\epsilon\in \R_{>0}$. Then there is a natural number $n$ depending only on $r,\epsilon$
satisfying the following.
Let $f\colon X\rightarrow Z$ be a toric Mori fiber space of relative dimension $r$ where $X$ is
$\Q$-factorial and $\epsilon$-lc. Then there is a finite morphism $\pi\colon X'\to X$ such that
\begin{itemize}
\item $X'$ is $\Q$-factorial,

\item degree of $\pi$ is $\le n$, and

\item the general fibre of the induced morphism $X'\to Z$ is isomorphic to $\PP^r$ as toric varieties.
\end{itemize}
\end{lem}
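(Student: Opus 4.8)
The plan is to use the classification of toric Fano fibrations of Picard number one via their fan structure. Since $f\colon X\to Z$ is a toric Mori fibre space of relative dimension $r$, working over the torus of $Z$ we have $X\cong F\times T$ where $F$ is a general fibre by Lemma \ref{l-contraction-over-torus}; moreover $F$ is a $\Q$-factorial toric Fano variety (since $-K_X$ is ample over $Z$ and $X$ is $\Q$-factorial) with $\rho(F)=\rho(X/Z)=1$, and $F$ is $\epsilon$-lc. A $\Q$-factorial toric variety of Picard number one of dimension $r$ is a \emph{fake weighted projective space}: its fan, in a lattice $N_F$ of rank $r$, has $r+1$ primitive ray generators $v_0,\dots,v_r$ which positively span $N_F\otimes\R$ and satisfy a single primitive relation $\sum_{i=0}^r a_i v_i=0$ with $a_i\in\Z_{>0}$, the maximal cones being the $r+1$ cones spanned by all but one $v_i$. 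Passing to the sublattice $N'\subseteq N_F$ generated by $v_0,\dots,v_r$ and keeping the same fan, Proposition \ref{P:toric-isogeny} gives a finite toric morphism $\pi_F\colon F'\to F$ with $F'=(N',\Sigma_F)$, and $F'$ is precisely the weighted projective space $\PP(a_0,\dots,a_r)$; iterating or composing with a further isogeny along a sublattice on which all $a_i$ become $1$ (e.g.\ replacing the lattice by the one generated by $\{v_i-v_j\}$, or by choosing the $v_i$ to be a scaled standard basis) one arrives at a finite cover whose general fibre is honest $\PP^r$. The whole construction globalises over $Z$: one performs the corresponding sublattice change on $N_X$ (the fan $\Sigma_X$ and the map $\pi\colon N_X\to N_Z$ are unchanged on the sublattice), obtaining a finite toric morphism $\pi\colon X'\to X$, with $X'$ still $\Q$-factorial (simplicial fan) and $X'\to Z$ having general fibre the desired $\PP^r$.

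The first two bullet points are then immediate: $X'$ is $\Q$-factorial because $\Sigma_X$ is simplicial and we have not changed the fan, and the general fibre is $\PP^r$ by construction. The content is the \emph{degree bound}. The degree of $\pi$ equals $[N_X:N']$ for the relevant sublattice, and this is controlled by (i) the index $[N_F:\langle v_0,\dots,v_r\rangle]$, i.e.\ the ``fakeness'' multiplicity of $F$, and (ii) the weights $a_0,\dots,a_r$, since passing from $\PP(a_0,\dots,a_r)$ to $\PP^r$ costs a factor dividing $(\prod a_i)$ (or more precisely $\mathrm{lcm}$-type quantities). Both must be bounded in terms of $r$ and $\epsilon$ alone. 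This is exactly where boundedness of $\epsilon$-lc Fano varieties enters: the set of $\epsilon$-lc toric Fano varieties of dimension $r$ is bounded (this is the toric case of the BAB-type statement, and in the toric setting it is elementary — the $\epsilon$-lc condition forces the simplices $\mathrm{conv}(0,v_{i_0},\dots,\widehat{v_{i_k}},\dots)$ to have bounded normalised volume, equivalently the $v_i$ to lie, up to $GL_r(\Z)$, in a fixed finite set), hence finitely many possible $(N_F,\Sigma_F)$ up to isomorphism and thus finitely many possible values of $[N_F:\langle v_i\rangle]$ and of the $a_i$. Taking $n$ to be the maximum of the resulting degrees over this finite list gives the bound.

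I would organise the write-up as: (1) reduce to understanding a general fibre $F$ via Lemma \ref{l-contraction-over-torus}, noting $F$ is $\Q$-factorial $\epsilon$-lc toric Fano with $\rho(F)=1$; (2) recall/derive the fake-weighted-projective-space description of such $F$ and the two-step isogeny $F''\to F'\to F$ with $F''\cong\PP^r$; (3) cite boundedness of $\epsilon$-lc toric Fano $r$-folds to bound the degree of this isogeny by some $n=n(r,\epsilon)$; (4) globalise the lattice construction to get $\pi\colon X'\to X$ over $Z$, checking $\Q$-factoriality is preserved and that the general fibre is $\PP^r$. The main obstacle is step (3) — making precise and citing (or proving, in the easy toric case) that the $\epsilon$-lc condition bounds both the fakeness index and the weights; everything else is bookkeeping with fans and sublattices. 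One should also take a small amount of care that ``general fibre isomorphic to $\PP^r$ as toric varieties'' is read correctly: it suffices that the fibre over the torus of $Z$ is $\PP^r$, which is what the splitting $X'\cong F''\times T$ over the torus provides.
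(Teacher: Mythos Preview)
Your outline is essentially the paper's proof: reduce to the general fibre via Lemma~\ref{l-contraction-over-torus}, recognise $F$ as a fake weighted projective space with $r+1$ primitive ray generators satisfying a primitive positive relation, invoke boundedness of $\epsilon$-lc toric Fanos (Borisov--Borisov) to bound the lattice data, pass to a suitable sublattice to make the fibre $\PP^r$, and globalise by changing only the $N_F$-summand of $N_X$. The paper streamlines your two-step isogeny into a single step: with $\sum q_i v_i=0$ it takes directly the sublattice $N_{X'}=\langle q_1v_1,\dots,q_rv_r,\,e_{r+1},\dots,e_d\rangle$ and checks that in this lattice the rays become $e_1',\dots,e_r',\,-e_1'-\cdots-e_r'$, i.e.\ the fan of $\PP^r$.

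One caution: your parenthetical suggestion of passing to the sublattice generated by the differences $v_i-v_j$ does \emph{not} in general produce $\PP^r$ (for instance, starting from $\PP(1,1,2)$ this lattice still gives a variety with weight vector $(1,1,2)$, not $\PP^2$); it is your second option---taking the $q_iv_i$ as a basis---that works, and that is exactly the construction the paper uses.
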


\begin{proof}
Since $f$ is a contraction, over the torus $U$ in $Z$, $X$ is isomorphic to $F\times U$
where $F$ is a general fibre of $f$, by Lemma \ref{l-contraction-over-torus}. In particular,
there is a one-to-one correspondence between the horizontal torus invariant prime divisors on $X$
and the torus invariant prime divisors on ${F}$. Since $\rho(X/Z)=1$, this implies that
$\rho(F)=1$. Moreover, $F$ is $\Q$-factorial as by the proof of Lemma \ref{l-contraction-over-torus},
its fan $\Sigma_F$ is just a sub-fan of the fan $\Sigma_X$ of $X$
which is simplicial. Therefore, the fan $\Sigma_F$ has exactly $r+1$ rays, say
$R_1,\dots, R_{r+1}$. These are also among the rays of the fan $\Sigma_X$.

 On the other hand, since $f$ is a Mori fibre space and $X$ is $\epsilon$-lc,
 $F$ is a toric Fano variety with $\epsilon$-lc singularities.
Such varieties belong to a bounded family \cite{Borisov-Borisov-93}.
Thus there are only finitely many possibilities for the fan $\Sigma_F$
up to the action of $GL_{r}(\mathbb{Z})$.

Let $v_i$ be the primitive element of the ray $R_i$, $i=1,\ldots,r+1$, in the fan $F$. Then there exist integers $q_i\in \mathbb{Z}$, such that $\sum_{i=1}^{r+1} q_iv_i=0$ and gcd$(q_1,\ldots,q_{r+1})=1$. Moreover, we can assume 
all $q_i$ are positive integers, since $\Sigma_F$ is a complete fan and its cones are generated by the subsets of 
$v_1,\dots,v_{r+1}$ of size $\le r$: if some $q_i$ are positive while some negative, then rearranging indices we find $s$ such that 
the cone generated by $v_1,\dots,v_s$ has a non-trivial intersection with the cone generated by $v_{s+1},\dots,v_{r+1}$ 
which is not possible.  Since the fan $\Sigma_{F}\subseteq \mathbb{R}^{r}$ can take only finitely many possibilities up to the action of $GL_{r}(\mathbb{Z})$ and gcd$(q_1,\ldots,q_{r+1})=1$, such $q_i$ belong to a finite set.

Let $e_1,\dots, e_d$ be a basis of $N_X$. By the proof of {Lemma }\ref{l-contraction-over-torus},
we can choose the $e_i$ so that $e_1,\dots, e_r$ is a basis of $N_F$ and $e_{r+1},\dots,e_d$
map onto a basis of $N_Z$. We can in addition assume that there are finitely many possibilities for
the vectors $q_iv_i$ for $i=1,\dots,r+1$: this can be achieved perhaps after a change of the basis $e_1,\dots,e_r$.

Let $N_{X'}$ be the sublattice of $N_X$ generated by $\{q_1v_1,\ldots,q_{r}v_{r},e_{r+1},\ldots,e_{d}\}$.
Then $N_X/N_{X'}$ is finite with order bounded from above, say by $n$, depending only on $r,\epsilon$.
Let $\Sigma_{X'}$ be the fan $\Sigma_X$ but considered as a fan in $N_{X'}\otimes \R$. Letting $X'$ be
the toric variety associated to $(N_{X'},\Sigma_{X'})$, we get a
finite morphism
$$
\pi\colon X'=(N_{X'},\Sigma_{X'})\rightarrow X=(N_X,\Sigma_X)
$$
of degree $|N_X/N_{X'}|\le n$, by Proposition \ref{P:toric-isogeny}.

Consider the induced toric morphism $f'\colon X'\to Z$:
$$\xymatrix{X'\ar[rr]^{\pi}\ar[drr]_{f'}&&X \ar[d]^{f} \\
&&Z}$$
 Its
general fibre $F'$ is the toric variety associated to $(N_{F'},\Sigma_{F'})$ where $N_{F'}$ is the lattice
generated by $\{q_1v_1,\ldots,q_{r}v_{r}\}$ and $\Sigma_{F'}$ is the fan generated by the rays
$R_1,\dots,R_{r+1}$. We claim that $F'$ is isomorphic to $\PP^r$ as toric varieties.
Consider $\mathbb{Z}^r$ with the standard basis $e_1',\ldots,e_r'$. Define a map $\psi:\Z^r\rightarrow N_{F'}$
of lattices by $\psi(e_i')=q_iv_i$, $i=1,\ldots,r$.
Then
$$
\psi(-e_1'-\cdots-e_{r}')=-\sum_{i=1}^{r}q_iv_i=q_{r+1}v_{r+1}.
$$
Thus we get an isomorphism between $F'$ and the toric variety determined by $\Z^r$ and the fan
given by $e_1',\ldots,e_r', -e_1'-\cdots-e_{r}'$. But the latter is just $\PP^r$.

Finally, note that the fan of $X'$ and $X$ are the same so $\Sigma_{X'}$ is simplicial, hence
$X'$ is $\Q$-factorial.
\end{proof}

\begin{lem} \label{L:P^r-case}
Assume that Theorem \ref{T:A} holds in dimension $\le d-1$.
Then Theorem \ref{T:A} holds in dimension $d$ when the general fibre of $f$ is isomorphic to $\PP^r$ as toric varieties.
\end{lem}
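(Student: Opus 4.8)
The plan is to replace $X$ by a single explicit toric blow--up $\mu\colon\tilde X\to X$ for which the induced morphism to $Z$ factors through a space of dimension $d-1$ with fibres of relative dimension $1$, and then to invoke Lemma~\ref{L:relative-dim} together with the relative dimension one case. The only delicate point is the toric combinatorics showing that the blow--up really does factor; the rest is bookkeeping with crepant pull--backs.

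\textbf{Step 1: the factorization.} We may assume $\dim Z\ge 1$, so $r\le d-1$. Over the big torus $U\subseteq Z$ we have $X|_U\cong F\times U$ by Lemma~\ref{l-contraction-over-torus}, where $F$ is the general fibre; the hypothesis $F\cong\PP^r$ gives a lattice identification $N_F\cong\Z^r$ under which the rays of the fibre fan become $v_1=e_1,\dots,v_r=e_r$ and $v_{r+1}=-(e_1+\cdots+e_r)$, and $v_1,\dots,v_{r+1}$ are exactly the rays of $\Sigma_X$ lying in $N_F\otimes\R$ (the horizontal rays). Since $N_F=\ker(N_X\to N_Z)$ is saturated in $N_X$, the cone $\sigma_0=\langle v_1,\dots,v_r\rangle$ is a smooth cone of $\Sigma_X$ and $v_{r+1}$ is primitive. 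Let $\mu\colon\tilde X\to X$ be the star subdivision of $\Sigma_X$ at $\sigma_0$, i.e. $\tilde X=\mathrm{Bl}_{\,\overline{O(\sigma_0)}}X$; it adds the single ray $v_0:=v_1+\cdots+v_r=-v_{r+1}$. Now $v_0$ and its primitive opposite $v_{r+1}$ are both rays of the complete fan $\tilde\Sigma_X$, and one checks that the quotient of $\tilde\Sigma_X$ by $\Z v_0$ is again a fan $\Sigma_V$; writing $V$ for the associated toric variety we obtain toric contractions $\tilde X\xrightarrow{h}V\xrightarrow{g}Z$ with $h$ of relative dimension $1$ and $\dim V=d-1$ (the general fibre of $g$ is $\PP^{r-1}$, corresponding to $\overline{v}_1,\dots,\overline{v}_r$ with $\sum\overline{v}_i=0$).

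\textbf{Step 2: transfer to $\tilde X$.} Let $\tilde\Delta$ be the toric boundary of $\tilde X$, which is the crepant pull--back of $\Delta$, and write $K_{\tilde X}+\tilde B+\tilde M=\mu^*(K_X+B+M)$ for the crepant pull--back of the nef--part data (the nef part being $M'$ on a common high resolution of $X$ and $\tilde X$); then $\tilde B$ is only a sub--boundary, its unique possibly negative coefficient being $1-a(D_{v_0},X,B+M)$ along the exceptional divisor $D_{v_0}$. Fix a rational number $\alpha_0\in(0,1/d)$. Since $D_{v_0}$ is the exceptional divisor of the blow--up of the smooth codimension $r$ subvariety $\overline{O(\sigma_0)}$ we have $a(D_{v_0},X,B+M)\le a(D_{v_0},X,0)=r\le d$, while $a(D_{v_0},X,B+M)\ge\epsilon$; hence the divisor $\Theta:=\alpha_0\tilde B+(1-\alpha_0)\tilde\Delta$ has coefficient $1-\alpha_0\,a(D_{v_0},X,B+M)\in(0,1-\alpha_0\epsilon]$ along $D_{v_0}$ and coefficients in $[0,1-\alpha_0\epsilon]$ elsewhere, so $\Theta\ge 0$. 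Moreover $(\tilde X,\Theta+\alpha_0\tilde M)$ is the crepant pull--back of the generalised $\alpha_0\epsilon$-lc pair $(X,\Gamma^{\alpha_0}+N^{\alpha_0})$, hence is itself a genuine generalised $\alpha_0\epsilon$-lc pair, and $K_{\tilde X}+\Theta+\alpha_0\tilde M\sim_\Q 0/Z$. We will use two formal points: (i) because $\mu$ is crepant for all the pairs involved, adjunction over $Z$ for $(\tilde X,\Theta+\alpha_0\tilde M)$ produces the same $b$-divisors on $Z$ as adjunction over $Z$ for $(X,\Gamma^{\alpha_0}+N^{\alpha_0})$; and (ii) for any $\alpha_1\in(0,1)$ one has $\alpha_1\Theta+(1-\alpha_1)\tilde\Delta=\alpha_1\alpha_0\tilde B+(1-\alpha_1\alpha_0)\tilde\Delta$ and $\alpha_1(\alpha_0\tilde M)=\alpha_1\alpha_0\tilde M$, which is precisely the crepant pull--back of $\Gamma^{\alpha_1\alpha_0}=\alpha_1\alpha_0 B+(1-\alpha_1\alpha_0)\Delta$ and of $N^{\alpha_1\alpha_0}=\alpha_1\alpha_0 M$ on $X$.

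\textbf{Step 3: conclusion.} Apply Lemma~\ref{L:relative-dim} in dimension $d$ with relative dimension $1$ to the pair $(\tilde X,\Theta+\alpha_0\tilde M)$ and the factorization $\tilde X\xrightarrow{h}V\xrightarrow{g}Z$. Its hypotheses hold: Theorem~\ref{T:A} in dimension $\le d-1$ is assumed (this applies to $g$ since $\dim V=d-1$), and Theorem~\ref{T:A} in dimension $d$ for relative dimension $1$ follows from Lemma~\ref{L:C}, because the averaged pair produced by adjunction in that case is dominated by $\alpha B_\bullet+(1-\alpha)\Delta_\bullet+\alpha M_\bullet$ whose discriminant coefficients are bounded away from $1$ by Lemma~\ref{L:C} together with Lemma~\ref{L:toric-adjunction} (which identifies $\Delta_\bullet$ with the toric boundary, so it is generalised $0$-lc). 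Lemma~\ref{L:relative-dim} then yields rational $\alpha_1,\delta_1\in(0,1)$ depending only on $d$ and $\alpha_0\epsilon$, hence only on $d,\epsilon$, such that in the adjunction formula for $(\tilde X,\alpha_1\Theta+(1-\alpha_1)\tilde\Delta+\alpha_1\alpha_0\tilde M)$ over $Z$ the base is generalised $\delta_1$-lc. By point (ii) this pair is the crepant pull--back of $(X,\Gamma^{\alpha}+N^{\alpha})$ with $\alpha:=\alpha_1\alpha_0\in(0,1)$, and by point (i) its adjunction over $Z$ computes exactly $(Z,\Gamma_Z^{\alpha}+N_Z^{\alpha})$. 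Hence $(Z,\Gamma_Z^{\alpha}+N_Z^{\alpha})$ is generalised $\delta$-lc with $\delta:=\delta_1$, and $\alpha,\delta$ depend only on $d,\epsilon$.

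\textbf{Main obstacle.} The crux is the toric--geometric claim in Step~1 that a \emph{single} star subdivision at $\sigma_0$ is enough for $h\colon\tilde X\to V$ to exist over $Z$, i.e. that $\tilde\Sigma_X/\Z v_0$ is a fan: over $U$ this is clear since $\tilde X|_U$ is the $\PP^1$-bundle $\PP_{\PP^{r-1}}(\mathcal O\oplus\mathcal O(1))\times U$, but over the toric boundary of $Z$ one must check that the star of $v_0$ and the star of $v_{r+1}$ in $\tilde\Sigma_X$ have the same image in $N_X/\Z v_0$. This is exactly where the rigidity of $\PP^r$ (equivalently the relation $v_0=-v_{r+1}$) is used, and it is what makes a bounded modification suffice, which in turn is what keeps $\alpha_0$ depending only on $d$; if instead one needed deeper subdivisions their discrepancies would be unbounded and $\Theta$ could fail to be effective. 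Everything else is the compatibility of the generalised canonical bundle formula with crepant pull--backs and with composition of contractions, for which \ref{S:Adjunction-gen-pair}, Lemma~\ref{l-adj-composition} and \cite{Filipazzi-18} suffice.
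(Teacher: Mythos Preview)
Your overall strategy coincides with the paper's: blow up the closure of a torus--invariant point of the generic fibre $F\cong\PP^r$, use that the blow--up of $\PP^r$ at a point is a $\PP^1$--bundle over $\PP^{r-1}$, control the new exceptional coefficient by averaging with $\Delta$ (you take $\alpha_0<1/d$, the paper takes $\theta=1/r$), and then feed the resulting relative--dimension--one factorisation into Lemma~\ref{L:relative-dim} together with Lemma~\ref{L:C}. The bookkeeping in your Steps~2 and~3 is correct and matches the paper's.

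The gap is exactly what you flag as the ``main obstacle'' and then leave unresolved. You assert that the quotient of $\tilde\Sigma_X$ by $\Z v_0$ is a fan, so that a toric contraction $\tilde X\to V$ of relative dimension one exists globally over $Z$. Over the torus $U\subseteq Z$ this is clear, but over the toric boundary of $Z$ the cones of $\tilde\Sigma_X$ are no longer just products with $\Sigma_F$, and there is no reason their images in $N_X/\Z v_0$ should meet along common faces; the relation $v_0=-v_{r+1}$ alone does not force this. Equivalently, the $\PP^1$--fibre class on $\tilde X|_U$ need not span an extremal ray of $\overline{\mathrm{NE}}(\tilde X/Z)$, so it need not be contractible before modifying $\tilde X$ over $Z\setminus U$. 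The paper does not attempt to prove your fan claim; instead it takes a $\Q$--factorialisation of $X$, performs the extremal extraction $g\colon Y\to X$ of the same divisor $E=D_{v_0}$, and then \emph{runs an MMP} on $K_Y+\Gamma_Y^\theta+N_Y^\theta-tE\equiv -tE$ over $Z$. This MMP may involve flips (all isomorphisms over $U$) and terminates in a toric Mori fibre space $Y'\to W$ which, restricted over $U$, is the $\PP^1$--bundle $Y_U\to\PP^{r-1}\times U$. Since $(Y,\Gamma_Y^\theta+N_Y^\theta)$ is generalised $\epsilon/r$--lc and the MMP is $(K_Y+\Gamma_Y^\theta+N_Y^\theta)$--trivial, $(Y',\Gamma_{Y'}^\theta+N_{Y'}^\theta)$ is still generalised $\epsilon/r$--lc, and one applies Lemma~\ref{L:relative-dim} to $Y'\to W\to Z$. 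Inserting this MMP step in place of your unproved fan claim repairs your argument and makes it essentially identical to the paper's.
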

\begin{proof}
By Lemma \ref{L:C}, we can assume that the relative dimension $r=\dim X-\dim Z$ is $\ge 2$.
Moreover, taking a toric $\Q$-factorialisation, we can assume $X$ is $\Q$-factorial.
Since $f$ is a toric morphism, $f^{-1}U\cong F\times U$ where $F$ is a general fibre of $f$ and
$U\subseteq Z$ is the torus, by Lemma \ref{l-contraction-over-torus}. Since $F\simeq \mathbb{P}^r$ by assumption,
we then have $f^{-1}U\cong \mathbb{P}^r\times U$ as toric varieties.
Choose a zero-dimensional torus invariant point $P\in F$.
 Taking the closure of $P\times U\subseteq F\times U$ in $X$, we get a torus invariant closed subvariety $\Pi\subseteq X$. Blowing up $F\times U$ at $P\times U$ gives an extremal contraction $Y_U\rightarrow F\times U$ and a $\mathbb{P}^1$-bundle $Y_U\rightarrow \mathbb{P}^{r-1}\times U$, because blowing up $\mathbb{P}^r$ at a point
 gives a $\mathbb{P}^1$-bundle over $\mathbb{P}^{r-1}$.

The closure $E$ of the exceptional divisor of $Y_U\rightarrow F\times U$ is a toric divisor over $X$,
so it determines an extremal toric divisorial contraction $g:Y\rightarrow X$ with the exceptional divisor $E$.
Over $U$, the two contractions $Y\rightarrow X$ and $Y_U\rightarrow F\times U$ are the same.

Write
$$
\left\{\begin{array}{rcl}
  K_Y+\Delta_Y&=&g^*(K_X+\Delta)\\
  K_Y+B_Y+M_Y&=&g^*(K_X+B+M).
\end{array}\right.
$$
Letting
$$
\Gamma^{\theta}=\theta B+(1-\theta)\Delta \ \ \mbox{and} \ \ N^{\theta}=\theta M,
$$
and
$$
\Gamma^{\theta}_Y=\theta B_Y+(1-\theta)\Delta_Y \ \ \mbox{and} \ \ N^{\theta}_Y=\theta M_Y,
$$
we have
$$
 K_Y+\Gamma_Y^{\theta}+N_Y^{\theta}=g^*(K_X+\Gamma^{\theta}+N^{\theta}).
$$
\

The coefficient of $E$ in $B_Y$ is bounded from below by $1-r$ because the codimension of $\Pi$ in $X$
is $r$. Let $\theta=\frac{1}{r}$. Then since the coefficient of $E$ in $\Delta_Y$ is $1$,
we have $\Gamma_Y^{\theta}\geq 0$ because the coefficient of $E$ in $\Gamma_Y^{\theta}$ is at least
$$
\frac{1}{r}(1-r)+1-\frac{1}{r}=0.
$$
In particular, $(Y,\Gamma_{Y}^{\theta}+N_{Y}^{\theta})$ is a generalised pair with nef part $\theta M'$ 
where we can assume $X'\bir Y$ is a morphism.

By construction, $Y\to Z$ is a toric contraction.
Running an MMP on
$$
K_Y+\Gamma_Y^{\theta}+N_Y^{\theta}-tE
$$
 over $Z$, for some small $t>0$, ends with a toric Mori fiber space $h:Y'\rightarrow W$ because 
 $$
 K_Y+\Gamma_Y^{\theta}+N_Y^{\theta}-tE\equiv -tE/Z.
 $$ 
 The MMP induces an MMP on $Y_U$ over $U$.
Let  $Y'_U$ and $W_U$ be the inverse images of $U$. Then $Y_U'\rightarrow W_U$ is a Mori fiber space.
But note that $Y_U$ has two extremal rays over $U$. One corresponds to $Y_U\rightarrow F\times U$ and the other corresponds
to the $\mathbb{P}^1$-bundle structure over $\mathbb{P}^{r-1}\times U$.
Since
$$
K_Y+\Gamma_Y^{\theta}+N_Y^{\theta}-tE
$$
is ample over $X$, and since $Y_U\rightarrow U$ has only two extremal rays,
the MMP on $Y_U$ simply gives the Mori fiber space $Y_U\rightarrow \mathbb{P}^{r-1}\times U$.
So $Y\dashrightarrow Y'$ is an isomorphism over $U$ and $Y'\rightarrow W$ over $U$ is just $Y_U\rightarrow \mathbb{P}^{r-1}\times U$. Therefore, $Y'\rightarrow W$ has relative dimension one with general fibre $\PP^1$.
Note that $f'\colon Y'\rightarrow Z$ factors as $Y'\rightarrow W\rightarrow Z$ where both
$Y'\rightarrow W$ and $W\rightarrow Z$ are toric contractions.

By construction, $(Y,\Gamma_{Y}^{\theta}+N_{Y}^{\theta})$ is generalised $\frac{\epsilon}{r}$-lc, so 
$(Y',\Gamma_{Y'}^{\theta}+N_{Y'}^{\theta})$ is generalised $\frac{\epsilon}{r}$-lc.
Now recalling that Theorem \ref{T:A} holds in the relative dimension one case by Lemma \ref{L:C}, and
then applying Lemma \ref{L:relative-dim} (by taking $r=1$ in the lemma), we deduce that the
theorem holds for
$$
(Y',\Gamma_{Y'}^{\theta}+N_{Y'}^{\theta})\to Z.
$$
More precisely,
there exist rational numbers $\gamma$ and $\delta>0$ depending only on $d,\epsilon$ (as $r\le d-1$) such that if
$$
\Omega_{Y'}^{\gamma}=\gamma \Gamma_{Y'}^{\theta}+(1-\gamma)\Delta_{Y'}, \ \ \ L_{Y'}^{\gamma}=\gamma N_{Y'}^{\theta},
$$
then in the adjunction formula
$$
K_{Y'}+\Omega_{Y'}^{\gamma}+L_{Y'}^{\gamma}\sim_{\mathbb{Q}}f'^*(K_Z+\Omega_Z^{\gamma}+L_Z^{\gamma}),
$$
$(Z,\Omega_Z^{\gamma}+L_Z^{\gamma})$ is generalised $\delta$-lc.

Defining $\Omega_{Y}^{\gamma}, L_{Y}^{\gamma}$ similarly, we see that $\Omega_{Y'}^{\gamma}, L_{Y'}^{\gamma}$
are the pushdowns of $\Omega_{Y}^{\gamma}, L_{Y}^{\gamma}$. Moreover, we can check that
$$
\Omega_{Y}^{\gamma}=\Gamma_{Y}^{\gamma\theta}:=\gamma\theta B_Y+(1-\gamma\theta)\Delta_{Y}
$$ 
and $L_{Y}^{\gamma}=N_{Y}^{\gamma\theta}:=\gamma\theta M_Y$. By the formulas in the
third paragraph above,
$$
 K_Y+\Gamma_Y^{\gamma\theta}+N_Y^{\gamma\theta}=g^*(K_X+\Gamma^{\gamma\theta}+N^{\gamma\theta})
$$
where $\Gamma^{\gamma\theta},N^{\gamma\theta}$ are defined similar to $\Gamma_Y^{\gamma\theta},N_Y^{\gamma\theta}$.
Therefore, we have the adjunction formula
$$
K_X+\Gamma^{\gamma\theta}+N^{\gamma\theta}\sim_{\mathbb{Q}}f^*(K_Z+\Gamma_Z^{\gamma\theta}+N_Z^{\gamma\theta})
$$
where $\Gamma_Z^{\gamma\theta}=\Omega_Z^{\gamma}$ and $N_Z^{\gamma\theta}=L_Z^{\gamma}$, and $(Z,\Gamma_Z^{\gamma\theta}+N_Z^{\gamma\theta})$ is generalised $\delta$-lc. Now let $\alpha=\gamma \theta$.
\end{proof}

\begin{lem} \label{L:B}
Assume that Theorem \ref{T:A} holds in dimension $\le d-1$. Then
Theorem \ref{T:A} holds in dimension $d$ when $f\colon X\rightarrow Z$ is a toric Mori fiber space
and $X$ is $\mathbb{Q}$-factorial.
\end{lem}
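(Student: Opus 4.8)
The plan is to reduce to the situation handled in Lemma~\ref{L:P^r-case} by passing to a finite toric cover of $X$ of bounded degree, supplied by Lemma~\ref{L:Pn}, after which the general fibre of $f$ becomes $\PP^r$. Write $r=\dim X-\dim Z$. If $r=1$ I would simply apply Lemma~\ref{L:C} to the generalised pair $(X,\Gamma^{1/2}+N^{1/2})$, which is projective generalised $\tfrac{\epsilon}{2}$-lc, satisfies $K_X+\Gamma^{1/2}+N^{1/2}=\tfrac12(K_X+B+M)+\tfrac12(K_X+\Delta)\sim_{\Q}0/Z$, and has $X$ of Fano type over $Z$; this gives Theorem~\ref{T:A} with $\alpha=\tfrac12$. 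So assume $r\ge 2$. Since $(X,B+M)$ is generalised $\epsilon$-lc with $B\ge 0$, the variety $X$ is $\epsilon$-lc, so Lemma~\ref{L:Pn} yields a finite toric morphism $\pi\colon X'\to X$ of degree $\le n=n(r,\epsilon)$ with $X'$ $\Q$-factorial such that the induced toric morphism $f'\colon X'\to Z$ — still a contraction, since the lattice map $N_{X'}\to N_Z$ remains surjective by the construction of $N_{X'}$ — has general fibre $\PP^r$.

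The next step is to pull the data back along $\pi$. Let $K_{X'}+\Delta'=\pi^*(K_X+\Delta)$, where $\Delta'$ is the toric boundary of $X'$, and let $K_{X'}+B'+M'_{X'}=\pi^*(K_X+B+M)$, so that $(X',B'+M'_{X'})$ is generalised sub-$\epsilon$-lc as in~\ref{S:gen-pair} although $B'$ need not be effective. Put $\alpha_0:=\tfrac1n$, $\Gamma'^{\alpha_0}:=\alpha_0B'+(1-\alpha_0)\Delta'$ and $N'^{\alpha_0}:=\alpha_0M'_{X'}$. The crucial observation is that $\Gamma'^{\alpha_0}\ge 0$: a non-toric prime divisor of $X'$ has the same non-negative coefficient in $B'$ as its image in $B$ because $\pi$ is étale over the big torus of $X$, while along a toric prime divisor $D'$ with ramification index $e\le n$ over $D=\pi(D')$ the crepant identity forces $\mu_{D'}B'=e(\mu_DB-1)+1\in[1-n,\,1-\epsilon]$ since $\mu_DB\in[0,\,1-\epsilon]$; hence $\mu_{D'}\Gamma'^{\alpha_0}\in\bigl[\alpha_0(1-n)+(1-\alpha_0),\ \alpha_0(1-\epsilon)+(1-\alpha_0)\bigr]=[0,\,1-\alpha_0\epsilon]$. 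Reading the same estimate on a common log resolution (combining the sub-$\epsilon$-lc part $B'$ with the lc toric boundary $\Delta'$) shows in addition that $(X',\Gamma'^{\alpha_0}+N'^{\alpha_0})$ is a generalised $\alpha_0\epsilon$-lc pair, and clearly $K_{X'}+\Gamma'^{\alpha_0}+N'^{\alpha_0}=\pi^*\!\bigl(\alpha_0(K_X+B+M)+(1-\alpha_0)(K_X+\Delta)\bigr)\sim_{\Q}0/Z$.

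Now I would apply Lemma~\ref{L:P^r-case} (legitimate because Theorem~\ref{T:A} is assumed in dimension $\le d-1$) to $(X',\Gamma'^{\alpha_0}+N'^{\alpha_0})\to Z$: there are rational numbers $\alpha',\delta\in(0,1)$ depending only on $d$ and $\alpha_0\epsilon$, hence only on $d,\epsilon$, such that the base of adjunction for $\bigl(X',\ \alpha'\Gamma'^{\alpha_0}+(1-\alpha')\Delta'+\alpha'N'^{\alpha_0}\bigr)$ over $Z$ is generalised $\delta$-lc. A one-line computation identifies $\alpha'\Gamma'^{\alpha_0}+(1-\alpha')\Delta'=\Gamma'^{\alpha''}$ and $\alpha'N'^{\alpha_0}=N'^{\alpha''}$ with $\alpha'':=\alpha'\alpha_0$, so the generalised pair $(Z,\Gamma'^{\alpha''}_Z+N'^{\alpha''}_Z)$ given by adjunction for $(X',\Gamma'^{\alpha''}+N'^{\alpha''})$ over $Z$ is generalised $\delta$-lc. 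It remains to transfer this back to $X$: since $K_{X'}+\Gamma'^{\alpha''}+N'^{\alpha''}=\pi^*(K_X+\Gamma^{\alpha''}+N^{\alpha''})$ and $\pi$ is finite, for every prime divisor over $Z$ the generalised lc threshold of its fibre is the same whether computed on $X$ or on $X'$ (finite crepant pullback preserves generalised sub-lc, by the Hurwitz comparison recalled in~\ref{S:gen-pair}), so the discriminant b-divisors — and then the moduli b-divisors — of the two adjunctions over $Z$ coincide. Hence $(Z,\Gamma^{\alpha''}_Z+N^{\alpha''}_Z)$, given by adjunction for $(X,\Gamma^{\alpha''}+N^{\alpha''})\to Z$, equals $(Z,\Gamma'^{\alpha''}_Z+N'^{\alpha''}_Z)$ and is generalised $\delta$-lc; I would take $\alpha=\alpha''$.

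The hard part is the effectivity step in the second paragraph: $\alpha_0$ must be calibrated to the degree bound $n$ from Lemma~\ref{L:Pn} so that the possibly very negative toric coefficients of the crepant pullback $B'$ are absorbed into $(1-\alpha_0)\Delta'$, while at the same time $\alpha_0\epsilon$ must stay bounded away from $0$ so that Lemma~\ref{L:P^r-case} still produces constants depending only on $d,\epsilon$. The other delicate point is the final step — checking that adjunction over $Z$ is insensitive to the finite cover $\pi$, so that the $\delta$-lc bound obtained on $X'$ descends to a statement about the adjunction attached to $X$ itself.
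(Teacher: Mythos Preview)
Your proof is correct and follows essentially the same route as the paper: pass to the bounded finite toric cover from Lemma~\ref{L:Pn}, average with the toric boundary using weight $1/n$ to restore effectivity, apply Lemma~\ref{L:P^r-case} on the cover, and then descend the adjunction along the finite crepant pullback. Your notation $X',\alpha_0,\alpha',\alpha''$ corresponds to the paper's $\tilde X,\lambda,\beta,\lambda\beta$; your explicit Hurwitz computation of $\mu_{D'}\Gamma'^{\alpha_0}$ is exactly the effectivity check the paper sketches, and your final ``adjunction is insensitive to the finite cover'' step is the paper's comparison $\tilde\Gamma_Z^{\lambda\beta}=\Gamma_Z^{\lambda\beta}$. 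The only cosmetic differences are that you treat $r=1$ separately via Lemma~\ref{L:C} (harmless but unnecessary, since Lemma~\ref{L:P^r-case} already absorbs that case), and you justify that $f'$ is a contraction via surjectivity of $N_{X'}\to N_Z$ rather than connectedness of the general fibre.
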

\begin{proof}
Let $\pi\colon \tilde{X}\to X$ be the finite morphism given by Lemma \ref{L:Pn}, and $\tilde{f}$ be the induced morphism
$\tilde{X}\to Z$. Since the general fibre of $\tilde{f}$ is isomorphic to $\PP^r$, $\tilde{f}$ is a contraction.
Write
$$
K_{\tilde{X}}+\tilde{\Delta}=\pi^*(K_X+\Delta)
$$
and
$$
K_{\tilde{X}}+\tilde{B}+\tilde{M}=\pi^*(K_X+B+M)
$$
defined as in \ref{S:gen-pair}. Then $(\tilde{X},\tilde{\Delta})$ is toric, that is, $\tilde{\Delta}$
is the toric boundary of $\tilde X$
but $(\tilde{X},\tilde{B}+\tilde{M})$ is only a generalised sub-pair with generalized sub-$\epsilon$-lc singularities.
There might be components of $\tilde{B}$ with negative coefficient but since $\deg \pi$ is bounded
by some fixed natural number $n$, such coefficients
are bounded from below by Hurwitz formula. Moreover, such components are components of $\tilde{\Delta}$ because 
$\tilde{f}$ can be ramified only along torus invariant divisors.
In particular, letting  $\lambda=\frac{1}{n}$ and letting
$$
\Gamma^{\lambda}=\lambda B+(1-\lambda)\Delta, \ \ \ N^{\lambda}=\lambda M,
$$
and
$$
\tilde{\Gamma}^{\lambda}=\lambda \tilde{B}+(1-\lambda)\tilde{\Delta}, \ \ \ \tilde{N}^{\lambda}=\lambda \tilde{M},
$$
we have
\begin{itemize}
\item $\tilde{\Gamma}^{\lambda}\geq 0$,

\item and
$$
K_{\tilde{X}}+\tilde{\Gamma}^{\lambda}+\tilde{N}^{\lambda}
=\lambda (K_{\tilde{X}}+\tilde{B}+\tilde{M})+(1-\lambda)(K_{\tilde{X}}+\tilde{\Delta})
$$
$$
=\pi^*\lambda (K_{{X}}+{B}+{M})+\pi^*(1-\lambda)(K_{{X}}+{\Delta})=\pi^*(K_X+\Gamma^{\lambda}+N^{\lambda})\sim_\Q 0/Z,
$$
and
\item $(\tilde{X},\tilde{\Gamma}^{\lambda}+\tilde{N}^{\lambda})$ is generalised $\lambda\epsilon$-lc.
\end{itemize}
\

Now by Lemma \ref{L:P^r-case}, there exist rational numbers $\beta,\delta \in(0,1)$ depending only on $\epsilon,d$ (as
$\lambda$ depends only on $n$ which in turn depends only on $r,\epsilon$ hence on $d,\epsilon$) such that
letting 
$$
\tilde{\Gamma}^{\lambda\beta}:=\lambda\beta \tilde{B}+(1-\lambda\beta)\tilde{\Delta}
=\beta\tilde{\Gamma}^{\lambda}+(1-\beta)\tilde{\Delta}
$$
and $\tilde{N}^{\lambda\beta}:=\lambda\beta\tilde{M}=\beta\tilde{N}^{\lambda}$, and considering adjunction 
for 
$$
(\tilde{X},\tilde{\Gamma}^{\lambda\beta}+\tilde{N}^{\lambda\beta})\to Z,
$$
the induced generalised 
pair $(Z,\tilde{\Gamma}_Z^{\lambda\beta}+\tilde{N}_Z^{\lambda\beta})$ is generalised $\delta$-lc. 

Similarly, let 
$$
{\Gamma}^{\lambda\beta}:=\lambda\beta {B}+(1-\lambda\beta){\Delta}
=\beta{\Gamma}^{\lambda}+(1-\beta){\Delta}
$$
and ${N}^{\lambda\beta}:=\lambda\beta{M}=\beta{N}^{\lambda}$, and consider adjunction 
for 
$$
({X},{\Gamma}^{\lambda\beta}+{N}^{\lambda\beta})\to Z,
$$
to get the generalised 
pair $(Z,{\Gamma}_Z^{\lambda\beta}+{N}_Z^{\lambda\beta})$. 

Now $\tilde{\Gamma}_Z^{\lambda\beta}=\Gamma_Z^{\lambda\beta}$ and $\tilde{N}_Z^{\lambda\beta}=N_Z^{\lambda\beta}$,
and similar equalities hold on birational models of $Z$:
indeed if $D$ is a prime divisor on $Z$, then for any real number $t$,
$$
(X,\Gamma^{\lambda\beta}+tf^*D+N^{\lambda\beta})
$$
is generalised lc over the generic point of $D$ if and only if
 $$
 (\tilde{X},\tilde{\Gamma}^{\lambda\beta}+t\tilde{f}^*D+\tilde{N}^{\lambda\beta})
 $$
 is generalised lc over the generic point of $D$; this in turn can be seen by comparing singularities on
 high resolutions of $X,\tilde{X}$; moreover, a similar statement holds for divisors on birational models
 of $Z$. So $\tilde{\Gamma}_Z^{\lambda\beta}=\Gamma_Z^{\lambda\beta}$ which in turn implies 
  $\tilde{N}_Z^{\lambda\beta}=N_Z^{\lambda\beta}$, and the same hold on birational models of $Z$.
Therefore, $(Z,\Gamma_Z^{\lambda\beta}+N_Z^{\lambda\beta})$ is generalised $\delta$-lc.
\end{proof}

\begin{proof}[Proof of Theorem \ref{T:A}]
Let $f\colon X\to Z$ be as in the theorem with $d=\dim X$ and  relative dimension $r=\dim X-\dim Z$.
By induction on dimension we can assume that the theorem holds in dimension $\le d-1$
and that the theorem holds in dimension $d$ and relative dimension $\le r-1$.
Taking a $\mathbb{Q}$-factorization of $X$ we can assume it is $\Q$-factorial.
Run an MMP on $K_X$ over $Z$. The MMP terminates with a toric Mori fibre space.
Replacing $X$ we can then assume we have a Mori fibre space structure $X\to V/Z$ and
that $X$ is $\mathbb{Q}$-factorial.

If $V\to Z$ is birational, then we can replace $Z$ with $V$, so we are done by Lemma \ref{L:B}.
Otherwise the relative dimension $\dim X-\dim V<r$, so we can apply Lemma \ref{L:relative-dim}.
\end{proof}

\begin{proof}(of Theorem \ref{T:A-usual})
This is a special case of Theorem \ref{T:A}.
\end{proof}

\begin{proof}(of Corollary \ref{cor-bnd-mult-fib})
Take a $\Q$-divisor $B$ so that $(X,B)$ is $\epsilon$-lc and $K_X+B\sim_\Q 0/Z$. 
By Theorem \ref{T:A-usual}, there exist rational numbers $\alpha,\delta \in (0,1)$ depending only on 
$d,\epsilon$ such that letting 
  $$
  \Gamma^{\alpha}=\alpha B+(1-\alpha)\Delta
  $$
  and letting
  $$K_X+\Gamma^{\alpha}\sim_{\mathbb{Q}} f^*(K_Z+\Gamma_Z^{\alpha}+N_Z^{\alpha})$$
  be given by adjunction, $(Z,\Gamma_Z^{\alpha}+N_Z^{\alpha})$ is generalised $\delta$-lc.
In particular, 
the coefficients of $\Gamma_Z^{\alpha}$ are bounded from above by $1-\delta$. This means that
for each prime divisor $D$ on $Z$, the lc threshold of $f^*D$ with respect to $(X,B)$ over the
generic point of $D$ is bounded from below by $\delta$ which in particular means that
the multiplicities (=coefficients) of $f^*D$ in components mapping onto $D$ are bounded from above.

\end{proof}

\begin{proof}(of Corollary \ref{cor-bnd-sing-Z})
Take $\alpha,\delta$ and $\Gamma^{\alpha}$ as in the proof of Corollary \ref{cor-bnd-mult-fib} giving the adjunction formula
 $$K_X+\Gamma^{\alpha}\sim_{\mathbb{Q}} f^*(K_Z+\Gamma_Z^{\alpha}+N_Z^{\alpha})$$
where $(Z,\Gamma_Z^{\alpha}+N_Z^{\alpha})$  is $\delta$-lc by Theorem \ref{T:A-usual}. Since $X$ is $\Q$-factorial,
$Z$ is also $\Q$-factorial, hence $Z$ has $\delta$-lc singularities.
\end{proof}


\vspace{2cm}

\flushleft{DPMMS}, Centre for Mathematical Sciences,\\
Cambridge University,\\
Wilberforce Road,\\
Cambridge, CB3 0WB,\\
UK\\
email: c.birkar@dpmms.cam.ac.uk

\vspace{1cm}

\flushleft{Hua Loo-Keng} Key Laboratory of Mathematics,\\
{Academy}  of Mathematics and Systems Science,\\
         Chinese Academy of Sciences,\\
         No. 55 Zhonguancun East Road,\\
         Haidian District, Beijing, 100190,\\
         China\\
email: yifeichen@amss.ac.cn

\end{document}

\begin{lem}
  With the setting of Theorem \ref{t-main2}, we can assume that $Z=\mathbb{A}^n$, the affine $n$-space.
\end{lem}

\begin{proof} Since $f:X\rightarrow Z$ is a Mori fiber space, the assumption that $X$ is $\mathbb{Q}$-factorial implies that so is $Z$ (\cite{Kawamata-Matsuda-Matsuki-87} Lemma 5-1-5).  Therefore, the $\mathbb{Q}$-factorial affine toric variety $Z$ is a quotient of $\mathbb{A}^n$ by a finite abelian group $G$. Moreover, the morphism $f:X\rightarrow Z$ can be factored through
$$\xymatrix{X\ar[rr]^{f'}\ar[rrd]_{f}&&\mathbb{A}^n\ar[d]\\
&&Z=\mathbb{A}^n/G}$$

Since $X$ is $\epsilon$-lc, $Z$ is $\delta_{d,n}(\epsilon)$-lc by \cite{Alexeev-Borisov-14} Theorem 1.3, where $\delta$ only depends on $d,n$ and $\epsilon$. It implies that the order of the finite group $G$ is bounded.

Hence, the boundedness of the multiplicity of the special fiber of $f:X\rightarrow Z$ can be reduced to the boundedness of the multiplicity of the special fiber of $f':X\rightarrow \mathbb{A}^n$, where $f':X\rightarrow \mathbb{A}^n$ is a toric Mori fiber space can be derived from the following Proposition.
\end{proof}